\documentclass[a4paper,11pt]{amsart}
\usepackage[utf8]{inputenc}
\usepackage{amsmath}
\usepackage{amsfonts}
\usepackage{amsthm}
\usepackage{amssymb}
\usepackage{color}
\usepackage{tikz}
\usepackage[margin=1in]{geometry}
\usepackage{todonotes}

\newcommand{\set}[1]{\left\{ #1 \right\}}
\newcommand{\norm}[1] {\left\| #1 \right\|}

\newcommand{\wmu}{\widetilde{\mu}}
\newcommand{\mbf}[1]{\mathbf{#1}}
\newcommand{\supp}[1]{\mathrm{Supp}(#1)}

\parindent0ex
\parskip\bigskipamount

\theoremstyle{plain}
\newtheorem{thm}{Theorem}[section]
\newtheorem{lem}[thm]{Lemma}
\newtheorem{prop}[thm]{Proposition}

\theoremstyle{definition}
\newtheorem{defn}[thm]{Definition}
\newtheorem{exmp}[thm]{Example}

\theoremstyle{remark}
\newtheorem{rem}[thm]{Remark}
\newcommand{\R}{\mathbb{R}}
\begin{document}
\title{Embeddability of generalized wreath products and box spaces}
\address{School of Mathematics,
University of Southampton, Highfield, Southampton, SO17 1BJ, United Kingdom.}
\author{Chris Cave}
\email[Chris Cave]{Chris.Cave89@gmail.com}

\address{School of Mathematics,
University of Southampton, Highfield, Southampton, SO17 1BJ, United Kingdom.}
\author{Dennis Dreesen}
\email[Dennis Dreesen]{Dennis.Dreesen@soton.ac.uk}

\address{(1) Clermont Université, Université Blaise Pascal, Laboratoire de Mathématiques, BP 10448, F-63000 Clermont-Ferrand, FRANCE
(2) CNRS, UMR 6620, LM, F-63171 AUBIERE, FRANCE
}
\author{Ana Khukhro}
\email[Ana Khukhro]{Ana.Khukhro@math.univ-bpclermont.fr}
\renewcommand{\thefootnote}{\fnsymbol{footnote}} 
\footnotetext{MSC2010: 20F65 (geometric group theory), 20E22 (Extensions, wreath products, and other compositions), 20F69 (asymptotic properties of groups)}
\footnotetext{The first author is sponsored by the EPSRC. The second author is a Marie Curie Intra-European Fellow within the 7th European Community Framework Programme.}
\renewcommand{\thefootnote}{\arabic{footnote}} 

\begin{abstract}
Given two finitely generated groups that coarsely embed into a Hilbert space, it is known that their wreath product also embeds coarsely into a Hilbert space. We introduce a wreath product construction for general metric spaces $X,Y,Z$ and derive a condition, called the ($\delta$-polynomial) path lifting property, such that coarse embeddability of $X,Y$ and $Z$ implies coarse embeddability of $X\wr_Z Y$. We also give bounds on the compression of $X\wr_Z Y$ in terms of $\delta$ and the compressions of $X,Y$ and $Z$. Next, we investigate the stability of the property of admitting a box space which coarsely embeds into a Hilbert space under the taking of wreath products. We show that if an infinite finitely generated residually finite group $H$ has a coarsely embeddable box space, then $G\wr H$ has a coarsely embeddable box space if $G$ is finitely generated abelian. This leads, in particular, to new examples of bounded geometry coarsely embeddable metric spaces without property $A$.
\end{abstract}
\maketitle
\section{Introduction}
Ever since the recently discovered relations between the Novikov conjecture and coarse embeddability \cite{STY}, this latter property has been the focal point of much research.
Concretely, for a finitely generated group with a word metric relative to a finite generating subset, coarse embeddability into a Hilbert space implies the Novikov conjecture. This result was suggested by Gromov in \cite[Problems (4) and (5)]{Ferry1995} and proven in \cite{STY}. Later, in \cite{KY}, the same result was proved for embeddings into uniformly convex Banach spaces, providing one of the motivations for studying embeddings into $l^p$-spaces for $p\neq 2$.
\begin{defn}[see \cite{G01}]
Fix $p\geq 1$. A metric space $(X,d)$ is {\em coarsely embeddable into an $L^p$-space} if there exists a measure space $(\Omega,\mu)$, non-decreasing functions $\rho_-,\rho_+:\R^+ \rightarrow \R^+$ such that $\lim_{t\to \infty} \rho_-(t)=+\infty$ and a map $f:X\rightarrow L^p(\Omega,\mu)$, such that
\[ \rho_-(d(x,x')) \leq \|f(x)-f(x')\|_p \leq \rho_+(d(x,x')) \ \forall x,x'\in X. \]
The map $f$ is called a {\em coarse embedding} of $X$ into $L^p(\Omega,\mu)$ and the map $\rho_-$ is called a {\em compression function} for $f$.
\end{defn}
In $2004$, Guentner and Kaminker introduced a numerical invariant that can be used to quantify ``how well'' a metric space $(X,d)$ embeds coarsely into a Hilbert space \cite{GK04}. This links coarse embeddability to the well-studied notion of quasi-isometric embeddability \cite{dCTV07}.
\begin{defn}
Fix $p\geq 1$. Given a metric space $(X,d)$ and a measure space $(\Omega,\mu)$, the {\em $L^p$-compression} $R(f)$ of a coarse embedding $f:X\rightarrow L^p(\Omega,\mu)$ is defined as the supremum of $r\in [0,1]$ such that
\[ \exists C,D>0, \forall x,x'\in X: \frac{1}{C} d(x,x')^r - D \leq \| f(x)-f(x')\| \leq C d(x,x') + D.\]
If such $r$ does not exist, then we set $R(f)=0$. The {\em $L^p$-compression} $\alpha_p(X)$ of $X$ is defined as the supremum of $R(f)$ taken over all coarse embeddings of $X$ into all possible $L^p$-spaces.
\end{defn}
In the setting of groups, compression is related to interesting group-theoretic properties. For example, it is known that finitely generated groups with non-equivariant compression $>1/2$ satisfy property $A$ (which is equivalent to exactness of the reduced $C^*$-algebra) \cite{GK04}. The converse is not true. Other interesting facts occur in the amenable case. If $G$ is an amenable group, then given a coarse embedding $f:G\rightarrow \mathcal{H}$ with compression $R(f)$, one can always find an affine isometric action of the group on a Hilbert space such that the associated $1$-cocycle also has compression $R(f)$ \cite{dCTV07}. This is related to properties such as Kazhdan's property $(T)$ and the Haagerup property.

\begin{defn}
Let $G$ be a group and $(\Omega,\mu)$ a measure space. Fix $p\geq 1$. A map $f:G\rightarrow L^p(\Omega,\mu)$ is called {\em $G$-equivariant} if there is an affine isometric action $\alpha$ of $G$ on $L^p(\Omega,\mu)$ such that $\forall g,h\in G: f(gh)=\alpha(g)(f(h))$.
A compactly generated, locally compact, second countable group $G$ equipped with the word length metric relative to a compact generating subset is said to satisfy the {\em Haagerup property} if it admits an equivariant coarse embedding into a Hilbert space.
\end{defn}

A lot of effort has gone into studying the behaviour of coarse embeddability and the Haagerup property under group constructions 
\cite{DG08}, \cite{CCJJV01}. In \cite{Li2010}, S. Li gave a proof of the fact that the wreath product of two countable groups with the Haagerup property is again Haagerup. Although using similar ideas, his proof is more concise than that of \cite{CSV12}, where the authors prove a more general statement. Instead of looking only at standard wreath products $G\wr H$, Cornulier, Stalder and Valette considered permutational wreath products $G\wr_X H:=G^{(X)}\rtimes H$, where $X$ is a countable $H$-set and $H$ acts on $G^{(X)}$ by shifting indices. They conjectured that the Haagerup property for $G$ and $H$ would imply the Haagerup property for any permutational wreath product $G\wr_X H$, but only proved it in the case where $X=H/L$ with $L$ {\em co-Haagerup} in $H$. Here, a subgroup $L<H$ is called {\em co-Haagerup} if there exists a proper $G$-invariant conditionally negative definite kernel on $H/L$. It was shown in \cite{Ioana} that the above mentioned conjecture is false and so the choice of $X$ is restricted.

The non-equivariant analogue of the Haagerup property is {\em coarse embeddability}. By the work of Dadarlat and Guentner, it follows that $G\wr H$ is coarsely embeddable if $G$ is coarsely embeddable and $H$ has property $A$. It is known that property $A$ implies coarse embeddability into a Hilbert space but the converse is unknown in the case of finitely generated groups. Li in \cite{Li2010} and Cornulier, Stalder and Valette in \cite{CSV12} show that coarse embeddability into a Hilbert space is preserved under wreath products, without referring to property $A$. Even stronger, for $p\in [1,2]$, Li in \cite{Li2010} proved that the $L^p$-compression of a wreath product $G\wr H$ is strictly positive whenever $G$ and $H$ have strictly positive $L^p$-compression. Although coarse embeddability and compression are defined for arbitrary metric spaces, the behaviour of compression and coarse embeddability had not been studied for any type of permutational wreath products.

Yu originally defined property $A$ as a means of guaranteeing the existence of a coarse embedding into a Hilbert space \cite{Y00}. The question of whether the two properties are actually equivalent went unanswered until Nowak gave an example of a metric space with a coarse embedding into a Hilbert space which does not have property $A$. Nowak's space, a disjoint union of $n$-dimensional cubes $\{0,1\}^n$ over all $n\in \mathbb{N}$, is not of bounded geometry. In \cite{Arzhantseva2012}, Arzhantseva, Guentner and Spakula provided the first bounded geometry example of a metric space which coarsely embeds into a Hilbert space but does not have property $A$ in the form of a \emph{box space} of a finitely generated free group (see definition \ref{def:boxspace}). The fact that it does not have property A follows from the work of Guentner (see Proposition 11.39 in \cite{R03}), where it is shown that any (or equivalently all) box spaces of a finitely generated residually finite group have property $A$ if and only if the group is amenable.
\newpage
In this paper, we define a general permutational wreath product $X\wr^C_Z Y$ of arbitrary metric spaces $X,Y,Z$ where $C\in \R^+$ and we investigate under which conditions the coarse embeddability of $X,Y,Z$ implies coarse embeddability of $X\wr_Z Y$. This leads to the definition of the ($\delta$-polynomial) path lifting property. Precisely, we obtain the following result. Our proof uses similar ideas as in \cite{CSV12} and \cite{Li2010}.
 \begin{thm}[see Theorem \ref{CEresult}]
  Let $X,Y,Z$ be metric spaces and $p \colon Y \to Z$ be a $C$-dense bornologous map with the coarse path lifting property. Assume that $Y$ is uniformly discrete and that $Z$ has $C$-bounded geometry. If $X,Y,Z$ are coarsely embeddable into a Hilbert space, then so is $X \wr_Z^C Y$.
 \end{thm}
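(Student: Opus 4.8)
The plan is to construct the coarse embedding of $W := X \wr_Z^C Y$ as an orthogonal direct sum of three pieces, adapting the lamplighter constructions of \cite{CSV12} and \cite{Li2010} and routing the ``travelling cost'' of the cursor through the map $p$. Recall that a point of $W$ is a pair $(\phi,y)$ with $\phi\colon Z\to X$ finitely supported (relative to a base point $x_0\in X$) and $y\in Y$, and that $d_W$ is the infimum, over admissible lamplighter tours, of the $Y$-length of the cursor walk plus the total $X$-cost of the lamp switches, a switch of the lamp at $z$ being admissible only when the cursor sits at some $y'$ with $d_Z(p(y'),z)\le C$. Fix coarse embeddings $f_X\colon X\to\mcl{H}_X$, $f_Y\colon Y\to\mcl{H}_Y$, $f_Z\colon Z\to\mcl{H}_Z$, normalised so that $f_X(x_0)=0$; after the usual coarsening we may also assume the upper control functions are subadditive. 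The hypotheses are used as follows: $C$-density and bornologousness of $p$, together with the coarse path lifting property, let us pass back and forth between walks in $Z$ through $\supp{\phi}$ and walks in $Y$ of coarsely comparable length, while the $C$-bounded geometry of $Z$ and uniform discreteness of $Y$ keep all the local counts below finite and uniformly bounded.

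The embedding is $F(\phi,y)=f_Y(y)\oplus F_{\mathrm{st}}(\phi)\oplus F_{\mathrm{tr}}(\phi,y)$. The \emph{cursor part} $f_Y(y)\in\mcl{H}_Y$ records the position of the cursor. The \emph{state part} $F_{\mathrm{st}}(\phi)=\bigl(f_X(\phi(z))\bigr)_{z\in Z}\in\ell^2(Z,\mcl{H}_X)$ records the lamp configuration; it is a finitely supported vector because $f_X(x_0)=0$. The \emph{tour part} $F_{\mathrm{tr}}(\phi,y)$ is the delicate ingredient: it must be a Hilbert-space-valued gadget whose distance between $(\phi,y)$ and $(\phi',y')$ grows with the length of \emph{any} cursor walk that starts at $y$, ends at $y'$, and passes $C$-close to every $z$ at which $\phi$ and $\phi'$ differ --- i.e.\ with the travelling-salesman cost of the symmetric difference of the supports, measured in $Z$ and transported to $Y$ via $p$. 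We build it as a sum over dyadic scales, attaching to each scale a uniformly bounded vector built from $f_Z$ that records how far $\supp{\phi}$ reaches out from $p(y)$ at that scale; using the embedding $f_Z$ of $Z$ itself (rather than a crude counting vector) is what prevents lamps clustered in a small region of $Z$ from being overcounted, so that $\|F_{\mathrm{tr}}(\phi,y)\|$ never blows up even though moving the configuration a long way forces many scales to flip. Equivalently, one may phrase all of this in terms of conditionally negative definite kernels, as in \cite{CSV12}.

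For the \emph{upper bound}, fix a near-optimal tour realising $d_W((\phi,y),(\phi',y'))=D$. The cursor part contributes $\|f_Y(y)-f_Y(y')\|\le\rho_+^Y(d_Y(y,y'))\le\rho_+^Y(D)$. For the state part, the switching moves of the tour witness $\sum_z d_X(\phi(z),\phi'(z))\le D$, and uniform discreteness of $Y$ with $C$-bounded geometry of $Z$ bound the number of switched lamps by a multiple of $D$, so subadditivity of $\rho_+^X$ bounds $\sum_z\|f_X(\phi(z))-f_X(\phi'(z))\|^2$ in terms of $D$. For the tour part, a near-optimal tour alters the relevant scale profiles in only boundedly many places --- again using $C$-bounded geometry of $Z$ --- so $\|F_{\mathrm{tr}}(\phi,y)-F_{\mathrm{tr}}(\phi',y')\|$ is controlled by $D$ as well. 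For the \emph{lower bound}, suppose $d_W((\phi,y),(\phi',y'))$ is large; pick a near-optimal tour and split into cases according to which contribution is large: (i) the cursor walk is long because $y,y'$ are themselves $Y$-far, caught by $\|f_Y(y)-f_Y(y')\|\ge\rho_-^Y(d_Y(y,y'))$; (ii) the total $X$-cost of the switches is large, caught by $F_{\mathrm{st}}$ since the switched lamps are locally finite; (iii) the cursor must detour through positions of the symmetric difference that are spread out in $Z$, which is exactly where the coarse path lifting property bites: such a detour forces, via $p$, a lamp position in the symmetric difference that is $Z$-far from $p(y)$ and $p(y')$, or a large $Z$-spread of the symmetric difference, which is detected by $F_{\mathrm{tr}}$. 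Combining the three proper lower estimates over the orthogonal summands produces a proper compression function for $F$, and hence $X\wr_Z^C Y$ embeds coarsely into a Hilbert space.

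The main obstacle is precisely the construction and estimation of the tour part $F_{\mathrm{tr}}$: one needs a single Hilbert-space vector that (a) detects from below the travelling-salesman cost of $\supp{\phi}\,\triangle\,\supp{\phi'}$, (b) does not blow up when many lamps cluster in a small region of $Z$, and (c) transfers cleanly between $Z$, where the lamps live, and $Y$, where the cursor actually moves. Item (c) is exactly what the ($\delta$-polynomial) path lifting property, the $C$-density of $p$, and the $C$-bounded geometry of $Z$ are there to supply, and the polynomial degree $\delta$ is the quantity one would track through step (iii) of the lower bound to obtain the quantitative compression estimates mentioned in the introduction; the remaining steps are bookkeeping with the control functions $\rho_\pm^X$, $\rho_\pm^Y$, $\rho_\pm^Z$ and the subadditivity normalisation.
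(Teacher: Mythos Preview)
Your overall architecture---cursor part, state part, tour part---matches the paper's, but the proof has a genuine gap: you never actually construct $F_{\mathrm{tr}}$. You describe it as ``a sum over dyadic scales, attaching to each scale a uniformly bounded vector built from $f_Z$ that records how far $\supp{\phi}$ reaches out from $p(y)$,'' and then immediately concede that this is ``the main obstacle.'' That description is not a construction, and it is not clear it can be made into one: a vector depending only on $(\phi,y)$ and measuring the reach of $\supp{\phi}$ from $p(y)$ will not, upon taking differences, produce something controlled by $\supp{\phi^{-1}\phi'}$ (the symmetric difference), which is what the wreath metric actually sees. The paper's solution is the measured-walls lifting of \cite{CSV12} (Theorem~\ref{thm:wallsandembeddings} here): starting from a measured walls structure $\mu$ on $Z$ coming from the embedding $f_Z$, one obtains a wall pseudometric $d_{p\widetilde{\mu}}((\mbf{f},y),(\mbf{g},y'))=\mu\bigl(\{A:A\vdash \supp{\mbf{f}^{-1}\mbf{g}}\cup\{p(y),p(y')\}\}\bigr)$ on $W$, and this is by construction conditionally negative definite and depends on the symmetric difference. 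Your parenthetical ``equivalently, one may phrase all of this in terms of conditionally negative definite kernels, as in \cite{CSV12}'' is exactly the point, but it is doing all the work and you have not carried it out.

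Two further issues. First, the paper uses a \emph{fourth} summand $d_{\widetilde{\omega}}=\lvert\supp{\mbf{f}^{-1}\mbf{g}}\rvert$, and it is not optional: in the lower-bound direction one needs to know that the number of lamps in the symmetric difference is bounded by $R$ before the path-lifting estimate $\path{I}{y}{y'}\le 2R\,\theta(\rho_Z^{-1}(R))+\rho_Y^{-1}(R)$ can be assembled (each of the $\le R$ lifted hops has length $\le\theta(\rho_Z^{-1}(R))$). Your case (iii) of the lower bound says only that \emph{some} lamp is $Z$-far from $p(y)$ and $p(y')$; that is not enough, since the travelling-salesman cost can be large because many lamps are each moderately far, and you need both the pairwise $d_\mu$-bound (from $d_{p\widetilde{\mu}}\le R$) and the cardinality bound (from $d_{\widetilde{\omega}}\le R$) simultaneously. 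Second, in the upper-bound direction the paper bounds $d_{p\widetilde{\mu}}$ by $\sum_i d_\mu(z_i,z_{i+1})$ along a near-optimal tour and uses bornologousness of $p$ plus $C$-bounded geometry; your sketch (``alters the relevant scale profiles in only boundedly many places'') does not obviously yield this without the explicit wall formula.
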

 We also give bounds on the Hilbert space compression of this wreath product in terms of $\delta$ and the Hilbert space compression of $X,Y$ and $Z$. The bounds we obtain coincide with the bounds given in \cite{Li2010} when applied to standard wreath products.

%


Next, we turn to box spaces. We remark that by a result of Gruenberg \cite{Gru}, the wreath product $G\wr H$ of two residually finite groups $G$ and $H$ is itself residually finite if and only if either $H$ is finite or $G$ is abelian.
We prove the following result, obtaining new examples of coarsely embeddable bounded geometry metric spaces without property $A$.
\begin{thm}[see Theorem \ref{BoxResult}]\label{thm:intro}
Let $G$ be a finitely generated abelian group and let $H$ be a finitely generated residually finite group which has a box space which embeds coarsely into a Hilbert space. Then there is a box space of the wreath product $G\wr H$ which coarsely embeds into a Hilbert space.
\end{thm}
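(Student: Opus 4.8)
\emph{Plan and construction of the box space.}
The idea is to produce a concrete box space of $G\wr H$ out of an embeddable box space of $H$ and a filtration of $G$, and then to embed its finite pieces uniformly by feeding Theorem~\ref{CEresult} the restrictions of fixed embeddings of the two box spaces. Since $G$ is abelian, Gruenberg's theorem guarantees $\Gamma:=G\wr H=G^{(H)}\rtimes H$ is residually finite. Fix a decreasing chain $(N_i)$ of finite-index normal subgroups of $H$ with $\bigcap_iN_i=\{e\}$ whose box space $\square H=\bigsqcup_i H/N_i$ coarsely embeds into a Hilbert space (given by hypothesis), and a decreasing chain $(M_i)$ of finite-index subgroups of $G$ with $\bigcap_iM_i=\{e\}$ (possible as $G$ is finitely generated abelian, e.g.\ $M_i=i!\,G$). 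For each $i$, reducing each coordinate modulo $M_i$, summing over $N_i$-cosets, and reducing the base modulo $N_i$ gives a surjective homomorphism
\[
\pi_i\colon \Gamma\longrightarrow Q_i:=(G/M_i)^{H/N_i}\rtimes(H/N_i)=(G/M_i)\wr_{H/N_i}(H/N_i),\qquad \pi_i(f,h)=\Bigl(\,xN_i\mapsto{\textstyle\sum_{n\in N_i}}f(xn)+M_i\ ,\ hN_i\,\Bigr);
\]
commutativity of $G$ is exactly what makes the first coordinate well defined and multiplicative. Writing $K_i:=\ker\pi_i$ one checks, using abelianness of $G$ together with the coset decomposition of $N_i$ inside $N_{i-1}$, that $K_i\subseteq K_{i-1}$, and that $\bigcap_iK_i=\{e\}$: if $(f,h)\ne e$ then either $h\ne e$ (so $h\notin N_i$ for large $i$) or $f\ne e$, and for large $i$ the $N_i$-coset of a chosen $x_0\in\supp{f}$ meets $\supp{f}$ only in $x_0$ while $f(x_0)\notin M_i$. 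Hence $\square\Gamma:=\bigsqcup_iQ_i=\bigsqcup_i\Gamma/K_i$ is a genuine box space of $G\wr H$, and the task is to embed it coarsely into a Hilbert space.

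\emph{Embedding the pieces uniformly.}
Equip each $Q_i$ with the word metric of a fixed finite generating set $S$ of $\Gamma$; then $\square\Gamma$ is a bounded geometry metric space whose components diverge, so it suffices to produce coarse embeddings $F_i\colon Q_i\to\mathcal H$ whose compression function and upper control function do not depend on $i$ (a disjoint union of such pieces then embeds coarsely, by the routine ``direct sum with a divergent component-index coordinate'' argument). Up to the metric comparison discussed below, $Q_i$ is the generalized wreath product $(G/M_i)\wr^C_{H/N_i}(H/N_i)$ with $C=|S|$ and $p=\mathrm{id}_{H/N_i}$, which is visibly a $C$-dense bornologous map with the coarse path lifting property, while $H/N_i$ is uniformly discrete and $Z=H/N_i$ has $C$-bounded geometry uniformly in $i$ because $S$ is fixed. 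Theorem~\ref{CEresult} therefore applies to each $Q_i$. Feed it the coarse embeddings of the finite spaces $G/M_i$ and $H/N_i$ obtained by restricting fixed coarse embeddings of $\square G$ and $\square H$; here $\square H$ is coarsely embeddable by hypothesis, and $\square G$ is coarsely embeddable because $G$ is amenable and hence, by the Guentner criterion recalled in the introduction, all box spaces of $G$ have property~$A$, which implies coarse embeddability. The construction in the proof of Theorem~\ref{CEresult} then returns embeddings $F_i$ whose control functions depend only on $C$ and on the (common) control functions of these restricted embeddings, hence are uniform in $i$. Assembling the $F_i$ completes the proof.

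\emph{Where the work is.}
The one genuinely non-formal point is the metric comparison invoked above: one must check that the word metric the box space puts on $\Gamma/K_i=Q_i$ agrees, up to multiplicative and additive constants independent of $i$, with the metric of the generalized wreath product $(G/M_i)\wr^C_{H/N_i}(H/N_i)$. This is the familiar Steiner-tree/travelling-salesman description of length in a wreath product — the length of $(\bar f,\bar h)$ being comparable to $d_{H/N_i}(\bar e,\bar h)+\sum_{\bar x\in\supp{\bar f}}d_{G/M_i}(\bar e,\bar f(\bar x))$ plus the cost of a shortest tour in $\mathrm{Cay}(H/N_i)$ through $\{\bar e\}\cup\supp{\bar f}\cup\{\bar h\}$ — applied here uniformly in $i$, which is legitimate precisely because $S$, and hence the valence of every Cayley graph $\mathrm{Cay}(H/N_i)$, is fixed: the reduction maps $\Gamma\to Q_i$ being $1$-Lipschitz give one inequality, and lifting a geodesic of $Q_i$ realising the Steiner tour gives the other. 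A secondary point is to confirm that the proof of Theorem~\ref{CEresult} is quantitative enough to convert $i$-independent inputs into $i$-independent output control, which holds because that proof proceeds by an explicit construction rather than an abstract existence argument. Everything else — Gruenberg's theorem, the elementary chain computations for $(K_i)$, property~$A$ of $\square G$, and the disjoint-union principle — is routine.
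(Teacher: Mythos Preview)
Your proposal is correct and follows essentially the same approach as the paper: both construct the filtration $(K_i)$ of $G\wr H$ via the quotients $(G/M_i)\wr(H/N_i)$ (the paper cites Gruenberg's lemma rather than writing out your explicit coset-sum formula), verify nestedness and trivial intersection, and then apply Theorem~\ref{CEresult} (or Li's result) to each finite piece with control functions depending only on those of the fixed embeddings of $\Box G$ and $\Box H$, yielding a uniform embedding of the disjoint union. Your discussion of the metric comparison and of the quantitative nature of Theorem~\ref{CEresult} is more explicit than the paper's, which simply relies on the word-metric description of wreath products given in its Preliminaries.
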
 

The above result should also be considered in the following setting. Let $G$ be a finitely generated residually finite group, and $\{K_i\}$ a collection of finite index subgroups with trivial intersection. Roe shows in \cite{R03} that if the box space $\Box_{\{K_i\}} G$ is coarsely embeddable into a Hilbert space, then $G$ has the Haagerup property. 
This implication is not reversible. In fact, there exist groups with the Haagerup property for which every box space is an expander. For finitely generated residually finite groups, we can thus think of the property of admitting a box space which coarsely embeds into a Hilbert space as lying strictly between amenability and the Haagerup property. One can begin investigating which groups lie in this class by proving stability results for various group constructions. In \cite{Khu}, this is done for certain group extensions and our Theorem \ref{thm:intro} treats the case of wreath products.

\section{Preliminaries}
Given two finitely generated groups $G$ and $H$, the wreath product, written as $G \wr H$ is the set of pairs $(\mathbf{f},h)$ where $h \in H$ and $\mathbf{f} \colon H \to G$ is a finitely supported function (i.e. $\mathbf{f}(h) = e_G$ for all but finitely many $h \in H$) together with a group operation
\begin{equation*}
 (\mathbf{f},h) \cdot (\mathbf{g}, h') = (\mathbf{f} \cdot (h \mathbf{g}), hh') 
\end{equation*}
where $(h \mathbf{g})(z) = \mathbf{g}(h^{-1} z)$ for all $z \in G$. One can think of $G \wr H$ as being the semi-direct product $\bigoplus_{H}G \rtimes H$ where $H$ acts on $\bigoplus_{H}G$ by permuting the indices. If finite sets $S$ and $T$ generate $G$ and $H$ respectively then $G \wr H$ is generated by the finite set
\begin{equation*}
 \set{(\mathbf{e},t) : t \in T} \cup \set{(\delta_s, e_H) : s \in S}
\end{equation*}
where $\mathbf{e}(h) = e_G$ for all $h \in H$ and \begin{equation*}\delta_s(h) = \begin{cases}
                                                                  s & \quad \mbox{if $h = e_H$}\\
                                                                  e_G & \quad \mbox{otherwise.}
                                                                 \end{cases}
                                                                 \end{equation*}
                                                                 
The word metric on $G \wr H$ coming from this generating set can be thought of as follows. Given two elements $(\mathbf{f},x)$ and $(\mathbf{g},y)$, take the shortest path in the Cayley graph Cay$(H,T)$ going from $x$ to $y$ that passes through the points in $\mbox{Supp}(\mathbf{f}^{-1} \mathbf{g}) = \set{h_1, \ldots, h_n}$. At each point  $h_i \in \mbox{Supp}(\mathbf{f}^{-1} \mathbf{g})$ travel from $\mbf{f}(h_i)$ to $\mbf{g}(h_i)$ in $G$. Explicitly for $(\mathbf{f},x), (\mathbf{g},y) \in \bigoplus_{g \in H} G \rtimes H$ and $\mbox{Supp }(\mathbf{f}^{-1} \mathbf{g}) = \set{h_1, \ldots, h_n}$ define
\begin{equation*}
 p_{(x,y)}(\mathbf{f},\mathbf{g}) = \inf_{\sigma \in S_n} \left( d_H(x, h_{\sigma(1)}) + \sum_{i=1}^{n} d_H(h_{\sigma(i)}, h_{\sigma(i+1)}) + d_H(h_{\sigma(n)}, y) \right).
\end{equation*}
where the infimum is taken over all permutations in $S_n$. The number $p_{(x,y)}(\mathbf{f},\mathbf{g})$ corresponds to the shortest path between $x$ and $y$ in $H$ going through each element in $\supp{\mathbf{f}^{-1} \mathbf{g}}$. Hence the distance between $(\mathbf{f},x)$ and $(\mathbf{g},y)$ is
\begin{equation*}
 d_{G \wr H} ((\mathbf{f},x),(\mathbf{g},y) ) = p_{(x,y)}(\mathbf{f}, \mathbf{g}) + \sum_{h \in H} d_G(\mathbf{f}(h), \mathbf{g}(h))
\end{equation*}
Suppose $G$, $H$ are groups and $H$ acts transitively on a set $X$. Fix a base point $x_0 \in X$ and define the \emph{permutational wreath product} to be the group $G \wr_X H \mathrel{\mathop:}=  \bigoplus_{X} G \rtimes H$ where \begin{equation*}
\bigoplus_X G = \set{\mbf{f} \colon X \to G : \mbf{f}(x) = e_G \mbox{ for all but finitely many $x \in X$} }                                                                                                                                                                                                                                     \end{equation*}
and $H$ acts on $\bigoplus_{X} G$ by permuting the indices. If $S$ and $T$ generate $G$ and $H$ respectively then $G \wr_X H$ is generated by 
\begin{equation*}
 \set{(\mathbf{e},t) : t \in T} \cup \set{(\delta_s, e_H) : s \in S}
\end{equation*}
where $\mathbf{e}(x) = e_G$ for all $x \in X$ and \begin{equation*}
                                                   \delta_s(x) = \begin{cases}
                                                               s & \quad \mbox{if $x = x_0$}\\
                                                               e_G &\quad \mbox{otherwise}.                                                             
                                                              \end{cases}
                                                  \end{equation*}
The metric on $G \wr_X H$ from the generating set can be thought of as follows. Given two elements $(\mathbf{f},x)$ and $(\mathbf{g},y)$ take the shortest path going from $x$ to $y$ in Cay$(H,T)$ that passes through points $\set{h_1, \ldots, h_n}$ such that $\mbox{Supp}(\mathbf{f}^{-1} \mathbf{g}) = \set{h_1x_0, \ldots, h_nx_0}$. At each element  $h_i \in \mbox{Supp}(\mathbf{f}^{-1} \mathbf{g})$ travel from $\mbf{f}(h_ix_0)$ to $\mbf{g}(h_ix_0)$ in $G$. In general the shortest path is not necessarily unique.

Explicitly for $(\mathbf{f},x), (\mathbf{g},y) \in \bigoplus_{x \in X} G \rtimes H$, let $I = \supp{\mbf{f}^{-1}\mbf{g}}$ and let $n = |\supp{\mbf{f}^{-1}\mbf{g}}|$. Define $\mathcal{P}_I$ to be the set
\begin{equation*}
 \mathcal{P}_I \mathrel{\mathop:}= \set{(h_1, \ldots, h_n) \subset H^n : \set{h_1x_0, \ldots, h_n x_0} = I}.
\end{equation*}
In particular if $(h_1, \ldots, h_n) \in \mathcal{P}_I$ then any permutation of $(h_1, \ldots, h_n)$ is also in $\mathcal{P}_I$. Hence the length of the shortest path between $x$ and $y$ in $H$ passing though the points that project onto $\supp{\mbf{f}^{-1} \mbf{g}}$ is precisely
\begin{equation*}
 \rho_{(x,y)}(\mathbf{f}, \mathbf{g}) \mathrel{\mathop:}= \inf_{(h_1, \ldots, h_n) \in \mathcal{P}_I} \left( d(x, h_1) + \sum_{i=1}^{n-1} d(h_i, h_{i+1}) + d(h_n, y) \right).
\end{equation*}
Hence the distance between $(\mathbf{f},x)$ and $(\mathbf{g},y)$ is
\begin{equation*}
 d_{G \wr_X H} ((\mathbf{f},x),(\mathbf{g},y)) = \rho_{(x,y)}(\mathbf{f}, \mathbf{g}) + \sum_{z \in X} d_G(\mathbf{f}(z), \mathbf{g}(z)).
\end{equation*}
One can ask whether we can generalise this construction. 
Suppose $Y$ and $Z$ are metric spaces and $p \colon Y \to Z$ is a $C$-dense map, i.e. $B_Z(p(Y),C)=Z$. Given two points $y, y' \in Y$ and a finite sequence of points $I = \set{z_1, \ldots, z_n}$ in $Z$, we define $\mathcal{P}_I$ to be the set 
\begin{equation*}
 \mathcal{P}_I \mathrel{\mathop:}= \set{(y_1, \ldots, y_n) \subset Y^n : \exists \sigma \in S_n \mbox{ such that } \forall i, \ p(y_i) \in B(z_{\sigma(i)}, C)}.
 \end{equation*}
 In particular, if $(y_1, \ldots, y_n) \in \mathcal{P}_I$ then any permutation of $(y_1, \ldots, y_n)$ also lies in $\mathcal{P}_I$. We now define the \emph{length of the path from $y$ to $y'$ going through $I$} by
\begin{equation*}
 \path{I}{y}{y'} = \inf_{(y_1, \ldots, y_n) \in \mathcal{P}_I} \left( d_Y(y, y_1) + \sum_{i=1}^{n-1} d_Y(y_i, y_{i+1}) + d_Y(y_n, y') \right).
\end{equation*}
Let $X$ be another metric space and fix a base point $x_0 \in X$. Define $\bigoplus_Z X$ to be the set
\begin{equation*}
\bigoplus_Z X = \set{\mbf{f} \colon Z \to X : \mbf{f}(z) = x_0 \mbox{ for all but finitely many $z \in Z$}}.
\end{equation*}
For $\mbf{f}, \mbf{g} \in \bigoplus_Z X$ define $\supp{\mbf{f}^{-1} \mbf{g}} = (\supp{\mbf{f}} \cup \supp{\mbf{g}}) \setminus \set{z \in Z : \mbf{f}(z) = \mbf{g}(z)}$. Let $(\mbf{f},y),(\mbf{g},y') \in \bigoplus_Z X \times Y$ and let $I = \supp{\mbf{f}^{-1} \mbf{g}}$. Define a metric on the set $\bigoplus_Z X \times Y$ by
\begin{equation*}
 d((\mbf{f},y),(\mbf{g},y')) = \path{I}{y}{y'} + \sum_{z \in Z}d_X(\mbf{f}(z),\mbf{g}(z)).
\end{equation*}
We obtain a metric space $(\bigoplus_Z X \times Y,d)$, which we denote by $X \wr_Z^C Y$. When there is no risk for confusion, we will omit $C$ from this notation. When $X$ and $Y$ are graphs, then the metric wreath product $X \wr_Y Y$ coincides with the wreath product of graphs. See Definition 2.1 in \cite{Erschler2006}.

 \begin{defn}
Given two metric spaces $(X,d_X)$ and $(Y,d_Y)$, a \emph{coarse embedding} is a map $f \colon X \to Y$ such that there exist non-decreasing maps $\rho_-, \rho_+ \colon \mathbb{R}^+ \to \mathbb{R}^+$ where $\rho_{-,+}(t) \to \infty$ as $t \to \infty$ and
\begin{equation*}
 \rho_-(d_X(x,y)) \leq d_Y(f(x),f(y)) \leq \rho_+(d_X(x,y))
\end{equation*}
for all $x,y \in X$. A metric space $(X,d_X)$ is \emph{coarsely embeddable in a Hilbert space} if there exists a map that is a coarse embedding and its codomain is a Hilbert space. We usually shorten this property to saying $X$ is CE.
\end{defn}


\section{Measured walls \label{section:wallspaces}}
Let $X$ be a set and $2^X$ the power set of $X$. We endow $2^X$ with the product topology. For $x \in X$, denote $\mathcal{A}_x = \set{A \subset X : x \in A}$. This is a clopen subset in $2^X$. For two elements $x,y \in X$ we say a set $A \subset X$ \textit{cuts} $x$ and $y$, denoted $A \vdash \set{x,y}$ if $x \in A$ and $y \in A^c$ or $x \in A^c$ and $y \in A$. Likewise we say that $A$ cuts another set $Y$ if neither $Y \subset A$ nor $Y \subset A^c$.
\begin{defn}
 A \textit{measured walls structure} on a set $X$ is a Borel measure $\mu$ on $2^X$ such that for every $x,y \in X$,
 \begin{equation*}
  d_{\mu} (x,y) \mathrel{\mathop:}= \mu \left( \set{A \in 2^X : A \vdash \set{x,y} }\right) < \infty.
 \end{equation*}
\end{defn}
Since $\set{A \in 2^X : A \vdash \set{x,y}} = \mathcal{A}_x \bigtriangleup \mathcal{A}_y$, the set is measurable. It follows that $d_{\mu}$ is well defined and is a pseudometric on $X$, called the \textit{wall metric associated to $\mu$}.

\begin{exmp}
 Let $X$ be a set. A \emph{wall} is a set of $2^X$ of the form $\set{A, A^c}$ for some $A \subset X$. A \emph{space with walls} is a pair $(X, \mathcal{W})$ where $\mathcal{W}$ is collection of walls and for each $x,y \in X$ the number of walls $w(x,y)$ separating $x$ from $y$ is finite. That is
 \begin{equation*}
  w(x,y) = |\set{\set{A,A^c} \in \mathcal{W}: A \vdash \set{x,y}}| < \infty
 \end{equation*}
 For a collection of walls $\mathcal{W}$, define $\mathcal{H} \subset 2^X$ to be the set $\set{A \subset X : \set{A,A^c} \in \mathcal{W}}$. We call $\mathcal{H}$ the set of \emph{half-spaces} of $\mathcal{W}$ and $A\subset X$ a \emph{half-space} if $A \in \mathcal{H}$. For $B \subset 2^X$ define 
 \begin{equation*}
\mu(B) = \frac{1}{2} \sum_{A \in B \cap \mathcal{H}} 1  
 \end{equation*}
This is a measured walls structure and the associated pseudometric $d_{\mu}$ is the number of walls separating two points.
\end{exmp}
If $f \colon X \to Y$ is a map between sets and $(Y, \mu)$ is a measured walls structure, then we can push forward the measure $\mu$ by the inverse image map $f^{-1} \colon 2^Y \to 2^X$ and obtain a measure walls structure $(X, f^* \mu)$, where for $A \subset 2^X$, $f^* \mu (A) = \mu(\{f(B)\mid B\in A, B=f^{-1}(f(B))\})$. It follows that $d_{f^* \mu}(x,x') = d_{\mu}(f(x), f(x'))$.

Given a family of spaces $X_i$ with measured walls space structures $\mu_i$ and the natural projection maps $p_i \colon \bigoplus X_j \to X_i$, then the measure $\mu = \sum_I p_i^* \mu_i$ defines a measured walls space structure on $\bigoplus_i X_i$. The associated wall metric is $d_{\mu}((x_i), (y_i)) = \sum_i d_{\mu_i}(x_i, y_i)$.

\begin{defn}
Let $X$ be a set. A function $k \colon X \times X \to \mathbb{R}_+$ is a \emph{conditionally negative definite kernel} if $k(x,x) = 0$ and $k(x,y) = k(y,x)$ for all $x,y \in X$ and for every finite sequences $x_1, \ldots , x_n \in X$, $\lambda_1, \ldots, \lambda_n$ such that $\sum_{i=1}^n \lambda_i = 0$ we have
\begin{equation*}
 \sum_{i,j}\lambda_i \lambda_j k(x_i, x_j) \leq 0
\end{equation*}
\end{defn}

\begin{prop}[Proposition 6.16 in \cite{Chatterji2010}, see also Proposition 2.6 in \cite{CSV12}] \label{propembed}
 Let $X$ be a set and $k \colon X \times X \to \mathbb{R}_{+}$. The following are equivalent:
 \begin{enumerate}
  \item there exists $f \colon X \to L^1(X)$ such that $k(x,y) = \norm{f(x) - f(y)}_1$ for all $x,y \in X$;
  \item For every $p\geq 1$, there exists $f \colon X \to L^p(X)$ such that $(k(x,y))^{1/p} = \norm{f(x) - f(y)}_p$ for all $x,y \in X$;
  \item $k = d_{\mu}$ for some measured walls structure $(X, \mu)$.
 \end{enumerate}
\end{prop}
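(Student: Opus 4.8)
The plan is to prove the cycle of implications $(3)\Rightarrow(2)\Rightarrow(1)\Rightarrow(3)$. The implication $(2)\Rightarrow(1)$ is immediate: it is just the instance $p=1$ of condition (2). So the content is concentrated in $(3)\Rightarrow(2)$, which manufactures $L^p$-embeddings out of a wall measure, and $(1)\Rightarrow(3)$, which extracts a wall measure from an $L^1$-embedding by a ``layer-cake'' decomposition.

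For $(3)\Rightarrow(2)$, suppose $k=d_\mu$ for a measured walls structure $(X,\mu)$ on $2^X$. Fix a basepoint $x_0\in X$ and define $f\colon X\to L^p(2^X,\mu)$ by $f(x)=\mathbf{1}_{\mathcal{A}_x}-\mathbf{1}_{\mathcal{A}_{x_0}}$. This genuinely lands in $L^p$, since $\norm{f(x)}_p^p=\mu(\mathcal{A}_x\bigtriangleup\mathcal{A}_{x_0})=d_\mu(x,x_0)=k(x,x_0)<\infty$ by definition of a measured walls structure. Since $\mathbf{1}_{\mathcal{A}_x}-\mathbf{1}_{\mathcal{A}_y}$ only takes the values $0$ and $\pm 1$, for all $x,y\in X$ we get
\begin{equation*}
 \norm{f(x)-f(y)}_p^p=\norm{\mathbf{1}_{\mathcal{A}_x}-\mathbf{1}_{\mathcal{A}_y}}_p^p=\mu(\mathcal{A}_x\bigtriangleup\mathcal{A}_y)=d_\mu(x,y)=k(x,y),
\end{equation*}
that is, $\norm{f(x)-f(y)}_p=k(x,y)^{1/p}$, which is (2). (The instance $p=1$ also gives $(3)\Rightarrow(1)$ outright.)

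For $(1)\Rightarrow(3)$, let $f\colon X\to L^1(\Omega,\nu)$ satisfy $k(x,y)=\norm{f(x)-f(y)}_1$; after a harmless reduction we may assume $\nu$ is $\sigma$-finite (this is automatic when $X$ is countable, which covers the applications). Consider the map $\Phi\colon\Omega\times\R\to 2^X$ defined by $\Phi(\omega,t)=A_{\omega,t}$, where $A_{\omega,t}=\set{x\in X:f(x)(\omega)>t}$. For fixed $x$ we have $\Phi^{-1}(\mathcal{A}_x)=\set{(\omega,t):f(x)(\omega)>t}$, which is measurable, so $\Phi$ is measurable into $2^X$ equipped with the $\sigma$-algebra generated by the cuts $\mathcal{A}_x$ (all that the wall pseudometric depends on). Let $\mu=\Phi_*(\nu\otimes\mathrm{Leb})$ be the pushforward of the product of $\nu$ with Lebesgue measure on $\R$. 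The key observation is that $A_{\omega,t}$ cuts $\set{x,y}$ exactly when $\mathbf{1}_{f(x)(\omega)>t}\neq\mathbf{1}_{f(y)(\omega)>t}$, and the set of $t$ for which this occurs is the interval between $f(x)(\omega)$ and $f(y)(\omega)$, of length $|f(x)(\omega)-f(y)(\omega)|$. Hence, by Tonelli,
\begin{equation*}
 d_\mu(x,y)=\mu(\mathcal{A}_x\bigtriangleup\mathcal{A}_y)=\int_\Omega\int_\R\bigl|\mathbf{1}_{f(x)(\omega)>t}-\mathbf{1}_{f(y)(\omega)>t}\bigr|\,dt\,d\nu(\omega)=\int_\Omega|f(x)(\omega)-f(y)(\omega)|\,d\nu(\omega)=k(x,y),
\end{equation*}
which is finite; so $(X,\mu)$ is a measured walls structure with $d_\mu=k$, closing the cycle.

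The part I expect to demand the most care is the measure-theoretic bookkeeping in the last step: checking that $\Phi$ is measurable into $2^X$ with the relevant $\sigma$-algebra, that $\mu=\Phi_*(\nu\otimes\mathrm{Leb})$ is a genuine Borel measure there, and justifying the use of Tonelli (hence the passage to $\sigma$-finite $\nu$). The geometric heart of the argument — the layer-cake identity turning the $L^1$-norm of a difference of functions into the measure of a symmetric difference of half-space families, and the reverse indicator embedding — is short and essentially forced.
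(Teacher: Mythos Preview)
Your argument is correct and is essentially the standard proof: the indicator map $x\mapsto \mathbf{1}_{\mathcal{A}_x}-\mathbf{1}_{\mathcal{A}_{x_0}}$ for $(3)\Rightarrow(2)$, and the layer-cake/pushforward construction for $(1)\Rightarrow(3)$. There is nothing to compare, however, because the paper does not give its own proof of this proposition; it simply quotes it from \cite{Chatterji2010} (Proposition~6.16) and \cite{CSV12} (Proposition~2.6), and the argument you wrote is precisely the one found in those references.

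One small remark on your measure-theoretic caveats: the definition of a measured walls structure only asks for a Borel measure on $2^X$ and finiteness of $\mu(\mathcal{A}_x\bigtriangleup\mathcal{A}_y)$, and for the applications in this paper $X$ is always countable, so the $\sigma$-algebra generated by the $\mathcal{A}_x$ is already the full Borel $\sigma$-algebra of the product topology and the $\sigma$-finiteness of $\nu$ is automatic. Your honesty about these points is appropriate, but they cause no trouble here.
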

In order to prove our main result we make use of a method of lifting measured walls structures. First we require some technical definitions. Let $W,X$ be sets and $\mathcal{A} = 2^{(X)}$, the set of finite subsets of $X$.

\begin{defn}
 An $\mathcal{A}$-gauge on $W$ is a function $\phi \colon W \times W \to \mathcal{A}$ such that:
 \begin{align*}
  \phi(w,w') & = \phi(w',w) \quad \forall w,w' \in W \\
  \phi(w,w'') & \subset \phi(w,w') \cup \phi(w',w'') \quad \forall w,w',w'' \in W
 \end{align*}
\end{defn}
If $W$ is a group then $\phi$ is called \emph{left invariant} if $\phi(w w',w w'') = \phi(w', w'')$ for all $w,w',w'' \in W$.

\begin{defn}
 Let $G$ be a group and $X$ a $G$-set. A measured walls structure $(X,\mu)$ is \emph{uniform} if for all $x,y \in X$ the map $g \mapsto d_{\mu}(gx,gy)$ is bounded on $G$.
\end{defn}

\begin{thm}
 Let $X,W$ be sets, $\mathcal{A} = 2^{(X)}$. Let $\phi$ be an $\mathcal{A}$-gauge on $W$ and assume that $\phi(w,w) = \emptyset$ for all $w \in W$. Let $(X, \mu)$ be a measured walls structure.
 \begin{enumerate}
  \item [(i)] There is a naturally defined measure $\widetilde{\mu}$ on $2^{W \times X}$ such that $(W \times X, \widetilde{\mu})$ is a measured walls structure with corresponding pseudometric
  \begin{equation*}
   d_{\widetilde{\mu}}(w_1 x_1, w_2 x_2) = \mu \left( \set{A \in \mathcal{A} : A \vdash \phi(w_1, w_2) \cup \set{x_1, x_2}} \right).
  \end{equation*}
  \item [(ii)] Let $H$ be a group and suppose that $X$ is an $H$-set and $W$ is an $H$-group where the action is by automorphisms. Suppose $\phi$ is $W$-invariant and $H$-equivariant. If $(X,\mu)$ is invariant (uniform) under $H$, then $(W \times X, \widetilde{\mu})$ is invariant (uniform) under $W \rtimes H$
 \end{enumerate}
\label{thm:wallsandembeddings}
\end{thm}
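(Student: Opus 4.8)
The plan is to build $\widetilde{\mu}$ as a continuous analogue of the construction recalled above which turns a family $(X_i,\mu_i)$ into a measured walls structure on $\bigoplus_i X_i$ with additive wall metric: the index set is now $(2^X,\mu)$ and the sum over indices becomes a $\mu$-integral. Concretely, for each $A\subseteq X$ I will produce a measured walls structure $\mu_A$ on $2^{W\times X}$ whose wall pseudometric is the indicator kernel
\[
 k_A\bigl((w_1,x_1),(w_2,x_2)\bigr):=\mathbf{1}\bigl[A\vdash\phi(w_1,w_2)\cup\{x_1,x_2\}\bigr],
\]
and then set $\widetilde{\mu}(\mathcal{B}):=\int_{2^X}\mu_A(\mathcal{B})\,d\mu(A)$ for measurable $\mathcal{B}\subseteq 2^{W\times X}$. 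Granting that each $\mu_A$ exists and that $A\mapsto\mu_A(\mathcal{B})$ is measurable, one gets $d_{\widetilde{\mu}}\bigl((w_1,x_1),(w_2,x_2)\bigr)=\int_{2^X}k_A\,d\mu(A)=\mu\{A:A\vdash\phi(w_1,w_2)\cup\{x_1,x_2\}\}$, which is the asserted formula. This quantity is finite since $\phi(w_1,w_2)\cup\{x_1,x_2\}$ is a finite set $\{y_1,\dots,y_m\}$, so that $\{A:A\vdash\{y_1,\dots,y_m\}\}\subseteq\bigcup_{i,j}\{A:A\vdash\{y_i,y_j\}\}$ has $\mu$-measure at most $\sum_{i,j}d_\mu(y_i,y_j)<\infty$; and $d_{\widetilde{\mu}}\bigl((w,x),(w,x)\bigr)=0$ because $\phi(w,w)=\emptyset$. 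Being the wall pseudometric of a measured walls structure, $d_{\widetilde{\mu}}$ is then automatically a pseudometric.

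The heart of the matter is the construction of $\mu_A$, and this is where the gauge axioms are used. Fix $A$; since $\phi$ is an $\mathcal{A}$-gauge with $\phi(w,w)=\emptyset$, the relations $w\approx_A w'\Longleftrightarrow\phi(w,w')\subseteq A$ and $w\approx_{A^c}w'\Longleftrightarrow\phi(w,w')\cap A=\emptyset$ are equivalence relations on $W$ (reflexivity uses $\phi(w,w)=\emptyset$, symmetry the symmetry of $\phi$, and transitivity the inclusion $\phi(w,w'')\subseteq\phi(w,w')\cup\phi(w',w'')$). Put $V_A:=(W/{\approx_A})\sqcup(W/{\approx_{A^c}})$ and define $\eta_A\colon W\times X\to V_A$ by sending $(w,x)$ to the $\approx_A$-class of $w$ when $x\in A$ and to the $\approx_{A^c}$-class of $w$ when $x\notin A$. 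A three-case check — $x_1,x_2$ both in $A$, both outside $A$, or separated by $A$ — gives
\[
 k_A\bigl((w_1,x_1),(w_2,x_2)\bigr)=\mathbf{1}\bigl[\eta_A(w_1,x_1)\neq\eta_A(w_2,x_2)\bigr].
\]
Now the kernel $\mathbf{1}[v\neq v']$ on any set $V$ is the wall pseudometric of the ``star'' measured walls structure $\nu_V=\tfrac12\sum_{v\in V}\delta_{\{v\}}$ on $2^V$, in which each singleton is a half-space of weight $\tfrac12$ (its wall pseudometric is bounded by $1$, so it is a measured walls structure). Since pull-backs of measured walls structures are measured walls structures, $\mu_A:=\eta_A^*\nu_{V_A}$ has wall pseudometric $k_A$, and its half-spaces are exactly the sets $c\times A$ for $c\in W/{\approx_A}$ and $c'\times A^c$ for $c'\in W/{\approx_{A^c}}$ (identifying a class with the subset of $W$ it is). This proves (i). Alternatively, each $k_A$ is realized in $\ell^1(V_A)$ via $(w,x)\mapsto\tfrac12\delta_{\eta_A(w,x)}$, so $k=\int_{2^X}k_A\,d\mu$ is a conditionally negative definite kernel realizable in an $L^1$-space and Proposition \ref{propembed} already produces a measured walls structure with wall metric $k$; but the fibered description is what feeds into (ii).

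For (ii), let $W\rtimes H$ act on $W\times X$ by $(w_0,h)\cdot(w,x)=(w_0\,(h\cdot w),\,h\cdot x)$. Left-invariance of $\phi$, $\phi(w_0w,w_0w')=\phi(w,w')$, shows that left translation by $w_0$ permutes the $\approx_A$-classes and the $\approx_{A^c}$-classes for each fixed $A$, hence permutes the half-spaces $c\times A$ and $c'\times A^c$ of $\mu_A$ while preserving their weights; so each $\mu_A$, and therefore $\widetilde{\mu}$, is $W$-invariant. For the $H$-part, $H$-equivariance $\phi(h\cdot w,h\cdot w')=h\cdot\phi(w,w')$ shows that $h$ sends $\approx_A$-classes to $\approx_{h\cdot A}$-classes; arguing directly with the metric formula, for $g=(w_0,h)$, $p=(w_1,x_1)$, $q=(w_2,x_2)$ and $Y:=\phi(w_1,w_2)\cup\{x_1,x_2\}$ one finds $d_{\widetilde{\mu}}(g p,g q)=\mu\{A:A\vdash h\cdot Y\}=\mu\bigl(h\cdot\{B:B\vdash Y\}\bigr)$, the contribution of $w_0$ having cancelled by left-invariance. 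Thus $H$-invariance of $\mu$ yields $W\rtimes H$-invariance of $\widetilde{\mu}$, and $H$-uniformity of $\mu$ yields, via $\{A:A\vdash h\cdot Y\}\subseteq\bigcup_{y,y'\in Y}\{A:A\vdash\{h\cdot y,h\cdot y'\}\}$ and finiteness of $Y$, the bound $\sup_{g\in W\rtimes H}d_{\widetilde{\mu}}(g p,g q)\le\sum_{y,y'\in Y}\sup_{h\in H}d_\mu(h\cdot y,h\cdot y')<\infty$, i.e.\ $W\rtimes H$-uniformity.

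I expect the only genuinely delicate point to be the identity $k_A=\mathbf{1}[\eta_A(\cdot)\neq\eta_A(\cdot)]$ together with the verification that $\approx_A$ and $\approx_{A^c}$ are equivalence relations; once these are in hand, parts (i) and (ii) are bookkeeping, the residual technicalities being the measurability of $A\mapsto\mu_A(\mathcal{B})$ (needed for the integrated measure $\widetilde{\mu}$ to make sense) and keeping the left/right conventions in the $W\rtimes H$-action consistent.
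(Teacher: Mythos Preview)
The paper does not give its own proof of this theorem: immediately after the statement it writes ``The proof of Theorem \ref{thm:wallsandembeddings} can be found in \cite{CSV12}.'' Your argument is correct and is, in fact, essentially the construction carried out in \cite{CSV12}: for each half-space $A\subseteq X$ one forms the equivalence relations $w\approx_A w'\Leftrightarrow\phi(w,w')\subseteq A$ and $w\approx_{A^c}w'\Leftrightarrow\phi(w,w')\subseteq A^c$, obtains the lifted half-spaces $c\times A$ and $c'\times A^c$ indexed by the equivalence classes, and integrates over $\mu$. Your verification of the key identity $k_A=\mathbf{1}[\eta_A(\cdot)\neq\eta_A(\cdot)]$ via the three cases, and of the equivariance in (ii), is accurate; the only loose end you flag yourself is the measurability of $A\mapsto\mu_A(\mathcal{B})$, which \cite{CSV12} handles by describing the lifted measure more explicitly as a push-forward, but this is a routine technicality.
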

The proof of Theorem \ref{thm:wallsandembeddings} can be found in \cite{CSV12}. A consequence of this theorem is that if $X,Y,Z$ are metric spaces where $X$ has a fixed point $x_0 \in X$ then $\supp{\mbf{f}^{-1} \mbf{g}}$ is an $\mathcal{A}$-gauge on $\bigoplus_Z X$, where $\mathcal{A} = 2^{(Z)}$. Hence if $Z$ has a measured walls structure there exists a lifted measured walls structure on $\bigoplus_Z X \times Z$.

\section{Coarse embeddings of wreath products}
\begin{defn}
A metric space $(X,d)$ is \emph{uniformly discrete} if there exists $\delta > 0$ such that for all $x \in X$, $B(x,\delta) = \set{x}$. We say that a metric space has $C$-bounded geometry for some $C>0$, if there exists a constant $N(C)>0$ such that $|B(x,C)| \leq N(C)$ for all $x \in X$. A metric space has \emph{bounded geometry} if it has $C$-bounded geometry for every $C>0$.
\end{defn}
\begin{exmp}
Note that $C$-bounded geometry for some $C$ does not in general imply bounded geometry. As an easy example, one can consider an infinite metric space equipped with the discrete metric, i.e. $d(x,y)=1$ for every $x,y\in X$ distinct.
\end{exmp} 

\begin{defn} \label{cplp}
  Let $Y$ and $Z$ be metric spaces. A $C$-dense map $p \colon Y \to Z$ has the \emph{coarse path lifting property} if there exists a non-decreasing function $\theta \colon \mathbb{R}^+ \to \mathbb{R}^+$, such that for any $z,z' \in Z$ and $y \in Y$ with $d_Y(p(y), z) \leq C$ there exists a $y' \in Y$ with $d(p(y'),z') \leq C$ and $d(y,y') \leq \theta (d(z,z'))$.
 \end{defn}

  \begin{defn}
  A map between metric spaces $f \colon Y \to Z$ is \emph{bornologous} if for every $R > 0$ there is $S_R > 0$ such that if $d_Y(y,y') \leq R$ then $d_Z(f(y),f(y')) \leq S_R$ for all $y,y' \in Y$.
 \end{defn}

\begin{exmp}
The path lifting property occurs naturally for example in the setting of groups. Let $Y=H$ be a group and let $N\triangleleft H$. The most natural way of defining a distance function on $Z:=H/N$ is by setting $d(hN,h'N)$ to be the infimum of $d(hn,h'n')$ over all $n,n'\in N$. The projection map $p:H\rightarrow H/N$ is a bornologous map and one checks easily that it satisfies the coarse path lifting property. Actually, one only needs the fact that $N$ is ``almost normal'' in $H$, i.e. that for every finite subset $F$ of $H$, there exists a finite subset $F'\subset H$ with $NF\subset F'N$.

Another example can be obtained by taking $Z$ to be the set of right $N$-cosets of $H$, where $N$ is any (not necessarily normal) subgroup of $H$. In this case, the projection map $p:H\rightarrow N\backslash H, g\mapsto Ng$ is a bornologous map that has the coarse path lifting property.
 \end{exmp}
 \begin{thm} \label{CEresult}
  Let $X,Y,Z$ be metric spaces and $p \colon Y \to Z$ be a $C$-dense bornologous map with the coarse path lifting property. Let $\theta \colon \mathbb{R}^+ \to \mathbb{R}^{+}$ be a non-decreasing function satisfying the properties in Definition \ref{cplp}. Assume that $Y$ is uniformly discrete and that $Z$ has $C$-bounded geometry. If $X,Y,Z$ are coarsely embeddable into an $L^1$-space, then so is $X \wr_Z^C Y$.
 \end{thm}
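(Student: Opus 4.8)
The plan is to realise on $W:=X\wr_Z^C Y$ a measured walls structure whose associated pseudometric $d_{\widetilde\mu}$ is coarsely equivalent to $d_W$; by Proposition \ref{propembed} such a pseudometric has the form $\|F(w)-F(w')\|_1$ for some $F\colon W\to L^1$, and the coarse equivalence then makes $F$ a coarse embedding. First, Proposition \ref{propembed} turns the coarse embeddings of $X,Y,Z$ into measured walls structures $\mu_X,\mu_Y,\mu_Z$ whose wall pseudometrics are sandwiched between the given lower and upper control functions $\rho^X_\pm,\rho^Y_\pm,\rho^Z_\pm$ of $d_X,d_Y,d_Z$. Since $X$ need not be uniformly discrete, I first replace $\mu_X$ by $\mu_X':=\mu_X+\nu$, where $\nu$ carries atoms of weight $\tfrac12$ at $\{x\}$ and $X\setminus\{x\}$ for every $x\in X$, so that $d_\nu(x,x')=1$ for $x\ne x'$; then $d_{\mu_X'}\ge 1$ on distinct points while still $\rho^X_-(d_X)\le d_{\mu_X'}\le\rho^X_+(d_X)+1$. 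Equivalently this adds the conditionally negative definite kernel $(\mathbf f,\mathbf g)\mapsto|\supp{\mathbf f^{-1}\mathbf g}|$ to the construction below; this correction is not cosmetic, since without it a pathological choice of $\mu_Z$ (for instance the Rademacher embedding of an infinite equilateral $Z$) can make $d_{\widetilde\mu}$ fail to be proper on configurations with large, tiny-valued supports.

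Next I would build the measured walls structure on $W=\bigoplus_Z X\times Y$ as a sum of three pieces, whose wall pseudometrics add up. (i) The pullback of $\mu_Y$ along $(\mathbf f,y)\mapsto y$, with pseudometric $d_{\mu_Y}(y,y')$. (ii) The pullback along $(\mathbf f,y)\mapsto(\mathbf f,p(y))$ of the measured walls structure on $\bigoplus_Z X\times Z$ given by Theorem \ref{thm:wallsandembeddings}(i), applied to the $\mathcal A$-gauge $\phi(\mathbf f,\mathbf g)=\supp{\mathbf f^{-1}\mathbf g}$ on $\bigoplus_Z X$ (here $\mathcal A=2^{(Z)}$, and $\phi(\mathbf f,\mathbf f)=\emptyset$) together with the structure $\mu_Z$; its pseudometric is $\mu_Z(\set{A : A\vdash\supp{\mathbf f^{-1}\mathbf g}\cup\{p(y),p(y')\}})$. (iii) The pullback along $(\mathbf f,y)\mapsto\mathbf f$ of the product structure $\sum_{z\in Z}(\mathrm{ev}_z)^*\mu_X'$ on $\bigoplus_Z X$, with pseudometric $\sum_{z\in Z}d_{\mu_X'}(\mathbf f(z),\mathbf g(z))$. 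Writing $I=\supp{\mathbf f^{-1}\mathbf g}$, the total $d_{\widetilde\mu}$ is the sum of these three expressions.

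It then remains to prove $\rho_-(d_W)\le d_{\widetilde\mu}\le\rho_+(d_W)$. The combinatorial core is a counting lemma: if $d_W((\mathbf f,y),(\mathbf g,y'))\le R$ then $|I|\le N(C)\bigl((R+1)/\delta+1\bigr)$, where $\delta$ witnesses uniform discreteness of $Y$ and $N(C)$ bounds $|B_Z(\cdot,C)|$. Indeed, an almost-optimal path from $y$ to $y'$ through points projecting into $C$-neighbourhoods of $I$ has length $\le R+1$; in a $\delta$-uniformly discrete space a walk has length at least $\delta$ times (number of distinct vertices visited, minus one), so this walk visits at most $(R+1)/\delta+1$ distinct vertices, and each point of $I$ lies in the $C$-ball of one of them, each such ball meeting $I$ in at most $N(C)$ points. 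Granting this, the upper bound is routine: on $\set{d_W\le R}$ one has $d_Y(y,y')\le\path{I}{y}{y'}\le d_W\le R$, the number of pairs from $I\cup\{p(y),p(y')\}$ is bounded, and all $d_X$- and $d_Z$-distances that occur are bounded (using that $p$ is bornologous), so each of the three pieces is bounded, and one takes $\rho_+(R)$ to be the supremum. For the lower bound, suppose $d_{\widetilde\mu}\le M$: piece (iii) gives $|I|\le M$ (from the added $|\supp{\cdot}|$) and $d_X(\mathbf f(z),\mathbf g(z))\le(\rho^X_-)^{-1}(M)$ for $z\in I$; piece (ii) gives $d_Z(p(y),z)\le(\rho^Z_-)^{-1}(M)$ for $z\in I$; piece (i) gives $d_Y(y,y')\le(\rho^Y_-)^{-1}(M)$. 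Feeding the first two estimates into the coarse path lifting property (with source point $p(y)$) produces, for each $z\in I$, a point $y_z\in Y$ with $p(y_z)$ within $C$ of $z$ and $d_Y(y,y_z)\le\theta(d_Z(p(y),z))\le\theta((\rho^Z_-)^{-1}(M))$; concatenating these along an arbitrary ordering of $I$ bounds $\path{I}{y}{y'}\le 2|I|\,\theta((\rho^Z_-)^{-1}(M))+d_Y(y,y')$, while $\sum_z d_X(\mathbf f(z),\mathbf g(z))\le|I|\,(\rho^X_-)^{-1}(M)$, so $d_W$ is bounded in terms of $M$ alone. Finally Proposition \ref{propembed} realises $d_{\widetilde\mu}$ in $L^1$, yielding the desired coarse embedding.

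I expect the main obstacle to be organising Step 3, in particular the counting lemma and the final part of the lower bound, where one must pass between paths in $Y$ and the support $I\subseteq Z$ through the bornologous, $C$-dense map $p$ and its lifting gauge $\theta$. This is exactly the place where uniform discreteness of $Y$ and $C$-bounded geometry of $Z$ are indispensable, and where the naive construction — without enriching $\mu_X$ by the discrete metric — breaks down.
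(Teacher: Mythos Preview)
Your proposal is correct and follows essentially the same approach as the paper: the paper's four measured walls pieces $p\widetilde\mu,\widetilde\sigma,\widetilde\nu,\widetilde\omega$ correspond exactly to your pieces (ii), (iii) with $\mu_X$, (i), and the $|\supp{\cdot}|$ correction you fold into $\mu_X'$, and the two-sided coarse equivalence is proved by the same counting lemma (uniform discreteness of $Y$ plus $C$-bounded geometry of $Z$) and the same path-lifting argument. The only cosmetic differences are that the paper keeps the $|\supp{\cdot}|$ piece as a separate summand $\widetilde\omega$ rather than absorbing it into the $X$-walls, and it lifts the points of $I$ sequentially ($y_0\to y_1\to\cdots$) rather than lifting each directly from $y$ as you do.
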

 \begin{rem}
Note that, by Proposition \ref{propembed}, the conclusion of the theorem also implies $L^p$-embeddability of $X \wr_Z Y$ for any $p\geq 1$. On the other hand, it is known that $L^p$ embeds isometrically into $L^1$ for $1\leq p \leq 2$. Hence in the formulation of Theorem \ref{CEresult}, we can just as well replace $L^1$-embeddability by $L^p$-embeddability for $1\leq p \leq 2$. 
\end{rem}
 \begin{proof}[Proof of Theorem \ref{CEresult}]
  By Proposition \ref{propembed}, there exists measured walls structures $(X, \sigma)$, $(Y,\nu)$, $(Z, \mu)$ and functions $\rho_X, \rho_Y, \rho_Z, \eta_X, \eta_Y, \eta_Z \colon \mathbb{R}^+ \to \mathbb{R}^+$ increasing to infinity, such that
 \begin{align}
  \rho_X (d_X(x_1, x_2) ) & \leq d_{\sigma}(x_1, x_2)  \leq \eta_X (d_X(x_1, x_2))  \quad \forall x_1, x_2 \in X \label{CEsigma}\\
  \rho_Y (d_Y(y_1, y_2) ) & \leq d_{\nu}(y_1, y_2)  \leq \eta_Y (d_Y(y_1, y_2)) \quad \forall y_1, y_2 \in Y \label{CEnu}\\
  \rho_Z (d_Z(z_1, z_2) ) & \leq d_{\mu}(z_1, z_2)  \leq \eta_Z (d_Z(z_1, z_2)) \quad \forall z_1, z_2 \in Z. \label{CEmu}
 \end{align}
By Theorem \ref{thm:wallsandembeddings}, there exists a measured walls structure $\widetilde{\mu}$ on $\bigoplus_Z X \times Z$ where for $(\mbf{f},z), (\mbf{g},z') \in \bigoplus_Z X \times Z$
\begin{equation*}
 d_{\wmu} ((\mbf{f}, z), (\mbf{g}, z')) = \mu (\{ A : A \vdash \supp{\mbf{f}^{-1} \mbf{g}} \cup \set{z,z'} \}).
\end{equation*}
We have a projection map $p \colon \bigoplus_Z X \times Y \to \bigoplus_Z X \times Z$ where $(\mbf{f}, y) \mapsto (\mbf{f},p(y))$. Using this we can pullback a measured wall structure on $\bigoplus_Z X \times Y$ where
\begin{equation*}
 d_{p \wmu} ((\mbf{f}, y), (\mbf{g}, y')) = d_{\wmu}((\mbf{f},p(y)),(\mbf{g},p(y')))
\end{equation*}
We define three other wall structures, $\widetilde{\sigma}, \widetilde{\nu}$ and $\widetilde{\omega}$, on $X \wr_Z Y$ where
\begin{align*}
 d_{\widetilde{\sigma}} ((\mbf{f},y), (\mbf{g}, y'))  = & \sum_{z \in Z} d_{\sigma}(\mbf{f}(z),\mbf{g}(z)), \\
 d_{\widetilde{\nu}}((\mbf{f},y), (\mbf{g}, y')) = & \ d_{\nu}(y,y'), \\
 d_{\widetilde{\omega}}((\mbf{f},y), (\mbf{g}, y')) = & \lvert \mbox{Supp}(\mbf{f^{-1}g}) \rvert.
\end{align*}
It is clear from our comments in Section \ref{section:wallspaces} on pushing forward and summing up wall space structures, that $\widetilde{\sigma}$ and $\widetilde{\nu}$ are indeed wall space structures.
To see that the latter is associated to a measure wall space structure, note that $d_{\widetilde{\omega}}$ is associated as in Proposition \ref{propembed} to the map $\Lambda:\bigoplus_Z X \times Y \rightarrow L^1(X\times Z), (\mbf{f},y) \mapsto \Lambda(\mbf{f},y)$, where
\[ \Lambda(\mbf{f},y): (x,z) \mapsto \begin{array}{l} 1/2 \mbox{ if } f(z)=x\\ 0 \mbox{ if } f(z)\neq x. \end{array} \]

We now aim to show that we can coarsely embed $X \wr_Z Y$ into an $L^1$-space. Define $\lambda = p \wmu + \widetilde{\sigma} + \widetilde{\nu}+\widetilde{\omega}$ to be a measured wall space structure on $X \wr_Z Y$. By Proposition \ref{propembed}, it suffices to show that for every $R >0$ if $d_{\lambda}((\mbf{f}, y),(\mbf{g},y')) \leq R$ then $d_{X \wr_Z Y} ((\mbf{f}, y),(\mbf{g},y')) \leq C_1(R)$ and if $d_{X \wr_Z Y}((\mbf{f}, y),(\mbf{g},y')) \leq R$ then $d_{\lambda}((\mbf{f}, y),(\mbf{g},y')) \leq C_2(R)$ where $C_1, C_2$ are constants depending only on $R$.

Fix $R>0$ and suppose $d_{\lambda}((\mbf{f}, y),(\mbf{g},y')) \leq R$. In particular 
\begin{align}
 d_{\wmu}((\mbf{f}, p(y)),(\mbf{g},p(y'))) & \leq R \label{mu} \\
 \sum_{z \in Z} d_{\sigma}((\mbf{f}(z),\mbf{g}(z)) & \leq R  \label{sigma}\\
 d_{\nu}(y,y') & \leq R \label{nu}\\
 \lvert \mbox{Supp}(\mbf{f^{-1}g}) \rvert & \leq R \label{omega}
 \end{align}
  Define $p(y)=z_0$ and write $\supp{\mbf{f}^{-1}\mbf{g}}=\{z_1,z_2,\ldots ,z_n\}$ for some $n\leq R$. By \eqref{mu} it follows that $\mu(A : A \vdash \supp{\mbf{f}^{-1} \mbf{g}} \cup \set{p(y),p(y')}) \leq R$. In particular $d_{\mu}(z_i,z_j) \leq R$ for all $z_i,z_j \in \supp{\mbf{f}^{-1} \mbf{g}} \cup \set{p(y),p(y')}$.  By Equation \eqref{CEmu},  this implies that $d_Z(z_i,z_j)\leq \rho_Z^{-1}(R)$ for all $z_i,z_j$. Starting from $y_0=y$, by the path lifting property, we can find $y_1$ such that $d_Z(p(y_1),z_1)\leq C$ and $d_Y(y,y_1)\leq \theta(\rho_Z^{-1}(R))$. We can then find $y_2$ with $d_Z(p(y_2),z_2)\leq C$ and $d_Y(y_1,y_2)\leq \theta(\rho_Z^{-1}(R))$. Continuing inductively and by the triangle inequality, we obtain
\[ \sum_{i=0}^{n-1} d_Y(y_i,y_{i+1}) +d_Y(y_n,y_0) \leq 2 \sum_{i=0}^{n-1} \theta(\rho_Z^{-1}(R))\leq 2R \theta(\rho_Z^{-1}(R)).\]
Using Equation \eqref{nu} and denoting $y_0=y$, we thus have that 
 \begin{equation}\label{longpath}
\path{I}{y}{y'} \leq  \sum_{i=0}^{n-1} d_Y(y_i,y_{i+1}) +d_Y(y_n,y_0) +d_Y(y,y')\leq 2R \theta(\rho_Z^{-1}(R)) + \rho_Y^{-1}(R)
  \end{equation}
%
%
  %
  Now we can deduce that
  \begin{align*}
   \path{I}{y}{y'} & +  \sum_{z \in Z} d_X((\mbf{f}(z),\mbf{g}(z)) \\ & \leq  2R \theta(\rho_Z^{-1}(R)) + \rho_Y^{-1}(R) 
  + \sum_{z \in \mathrm{Supp}(\mbf{f}^{-1}\mbf{g})} \rho_X^{-1}(R) \qquad \mbox{by \eqref{CEsigma}, \eqref{sigma} and \eqref{longpath}}\\
   & \leq  2R \theta(\rho_Z^{-1}(R)) + \rho_Y^{-1}(R) + R \rho_X^{-1}(R)  \qquad \mbox{by \eqref{omega}}
  \end{align*}
 It suffices to set $C_1(R) =  2R \theta(\rho_Z^{-1}(R)) + \rho_Y^{-1}(R) + R \rho_X^{-1}(R)$.

Now suppose conversely that $d_{X \wr_Z Y}((\mbf{f}, y),(\mbf{g},y')) \leq R$. In particular
  \begin{align}
   \path{I}{y}{y'} & \leq R \label{originaldistance}\\
   \sum_{z \in Z} d_X (f(z), g(z)) & \leq R \label{sumsigma}
  \end{align}
  Let $(y_1, \ldots, y_n) \in \mathcal{P}_I$ such that 
	\begin{equation}
	d_Y(y,y_1) + \sum_{i=1}^{n-1}d_Y(y_i, y_{i+1}) + d_Y(y_n, y') \leq R+1.
	\label{pathY}
	\end{equation}
	As $Y$ is uniformly discrete, we have $\delta_Y:= \inf(d(a,b)\mid a,b\in Y)>0$. This implies that, although some of the $y_i$ may be equal, the number of distinct $y_i$ is bounded by $\frac{R+1}{\delta_Y}$. Any point in the support of $\mbf{f^{-1}g}$ lies, by definition, in a $C$-neighbourhood of some $p(y_i)$. As such neighbourhoods contain at most $N(C)$ elements, we can conclude that
	\begin{equation} \label{eq:support}
	n=\lvert \supp{\mbf{f^{-1}g}} \rvert \leq E(R):=N(C)\frac{R+1}{\delta_Y}.
	\end{equation}
	
	From Equation \eqref{pathY} and the triangle inequality, it follows that
  \begin{equation} \label{ballinY}
d_Y(a,b) \leq R+1 \qquad \forall a,b \in \set{y, y', y_1, \ldots, y_n} 
  \end{equation}
  
 As $p$ is bornologous, there exists $S = S(R+1)$ such that for all $z,z' \in \set{p(y),p(y'),p(y_1), \ldots,p(y_n)}$, we have $d_Z(z,z') \leq S$ . By definition of $(y_1, \ldots, y_n)$ and the triangle inequality it follows that $d_Z(z,z') \leq S+2C$ for every $z,z' \in \supp{\mbf{f}^{-1} \mbf{g}} \cup \set{p(y),p(y')}$. By \eqref{CEmu} it follows that  
  \begin{equation} \label{borno}
d_{\mu}(z,z') \leq \eta_Z(S+2C) \qquad \forall z,z' \in  \supp{\mbf{f}^{-1} \mbf{g}} \cup \set{p(y),p(y')}. 
  \end{equation}
	 Let us enumerate $\supp{\mbf{f}^{-1} \mbf{g}} \sqcup \set{p(y),p(y')}=\{p(y)=z_0,z_1,\ldots ,z_{n+1}=p(y')\}$. Note that, if $A$ cuts $\supp{\mbf{f}^{-1} \mbf{g}} \cup \set{p(y),p(y')}$, then A must cut $\{z_i,z_{i+1}\}$ for some $i\in \{0,1,\ldots ,m-1\}$. Hence $d_{p \wmu}((\mbf{f},y),(\mbf{g},y'))\leq \sum_{i=0}^{n} d_\mu(z_i,z_{i+1})$.
	
	It now follows that
  \begin{align*}
   d_\lambda( (\mbf{f},y), (\mbf{g},y') ) = &   (d_{\widetilde{\nu}} +d_{p \wmu}+ d_{\widetilde{\sigma} } +d_{\widetilde{\omega}} )( (\mbf{f},y), (\mbf{g},y') )\\
	\leq &  \ d_{\nu} (y,y') + \sum_{i=0}^{n} d_{\mu} (z_i,z_{i+1})  + \sum_{z \in Z } d_\sigma (\mbf{f}(z), \mbf{g}(z)) +d_{\widetilde{\omega}} ( (\mbf{f},y), (\mbf{g},y') )\\
   \leq & \ \eta_Y(R) + \sum_{i=0}^{n} \eta_Z(S+2C) + \sum_{z \in Z } \eta_X(R) +E(R) \qquad \mbox{by \eqref{CEnu}, \eqref{sumsigma}, \eqref{borno}, \eqref{eq:support}}\\
   \leq & \ \eta_Y(R) + E(R) \eta_Z(S+2C) + E(R) \eta_X(R) +E(R)
  \end{align*}
  Hence, it suffices to set $C_2(R) :=\eta_Y(R) + E(R)(\eta_Z(S+2C) + \eta_X(R) +1)$. This shows by Proposition \ref{propembed} that $X\wr_Z Y$ embeds coarsely into an $L^p$-space. 
  \end{proof}
 \begin{rem} \label{rem:equivalent}
The only time that we used the conditions 
\begin{enumerate}
\item $Y$ is uniformly discrete,
\item $Z$ has $C$-bounded geometry,
\end{enumerate}
was to show that Equations \eqref{originaldistance} and \eqref{sumsigma} imply that $\lvert \supp{\mbf{f^{-1}g}}\rvert$ is bounded by some function of $R$. One checks easily that the above conditions could equally well be replaced by the following two conditions:
\begin{enumerate}
\item $Y$ has bounded geometry
\item $Z$ has $C$-bounded geometry.
\end{enumerate}
Alternatively, it would also be sufficient to require nothing on $Y$ and $Z$ but to ask that $X$ is a uniformly discrete metric space.
\end{rem}

\section{The compression of $X\wr_Z Y$ in terms of that of $X,Y,Z$}
We can modify the previous proof to give information on the $L^1$-compression of $X\wr_Z Y$ in terms of the growth behaviour of $\theta$ and the $L^1$-compression of $X,Y$ and $Z$.
\begin{defn}
  Let $Y$ and $Z$ be metric spaces and let $p \colon Y \to Z$ be a $C$-dense map with the coarse path lifting property, i.e. there exists a non-decreasing function $\theta \colon \mathbb{R}^+ \to \mathbb{R}^+$, such that for any $z,z' \in Z$ and $y \in Y$ with $d_Y(p(y), z) \leq C$ there exists a $y' \in Y$ with $d(p(y'),z') \leq C$ and $d(y,y') \leq \theta (d(z,z'))$. If $\delta>0$ is such that $\theta(r)\lesssim r^\delta+1$ for every $r\in \mathbb{R}^+$, then we say that $p$ has the \emph{$\delta$-polynomial path lifting property}. We say that $p$ has the polynomial path lifting property if it has the \emph{$\delta$-polynomial path lifting property} for some $\delta>0$.
\end{defn}
  \begin{thm}
	 Let $X,Y,Z$ be metric spaces and $p \colon Y \to Z$ be a $C$-dense bornologous map with the coarse path lifting property. Let $\theta \colon \mathbb{R}^+ \to \mathbb{R}$ be a non-decreasing function satisfying the properties in Definition \ref{cplp}. Assume that either ``$X$ is uniformly discrete'' or ``$Y$ is uniformly discrete and $Z$ has $C$-bounded geometry''. \\
	
Assume that there are constants $a,b>0$ such that $d_Z(p(y),p(y'))\leq ad_Y(y,y')+b$ for every $y,y'\in Y$. If $p$ has the $\delta$-polynomial path lifting property for some $\delta>0$ and if $X,Y,Z$ have $L^1$-compression equal to $\alpha,\beta,\gamma$ respectively, then the $L^1$-compression of $X\wr_Z^C Y$ is bounded from below by $\min(\alpha,\beta,\frac{\gamma}{\gamma+\delta})$. \label{thm:comp}
  \end{thm}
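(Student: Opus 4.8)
The plan is to rerun the proof of Theorem~\ref{CEresult}, tracking how each estimate scales with the wreath metric rather than with a fixed bound $R$. We may assume $\alpha,\beta,\gamma>0$, since otherwise the asserted lower bound is $0$; recall that compressions lie in $[0,1]$. Fix $\alpha'<\alpha$, $\beta'<\beta$, $\gamma'<\gamma$, so that $\alpha',\beta',\gamma'\le 1$. Unwinding the definition of $L^1$-compression and applying Proposition~\ref{propembed}, choose measured walls structures $(X,\sigma)$, $(Y,\nu)$, $(Z,\mu)$ and a constant $K\ge 1$ such that, for all $x,x'\in X$, $y,y'\in Y$, $z,z'\in Z$ respectively,
\begin{align*}
 \tfrac1K d_X(x,x')^{\alpha'}-K &\le d_\sigma(x,x')\le K\,d_X(x,x')+K,\\
 \tfrac1K d_Y(y,y')^{\beta'}-K &\le d_\nu(y,y')\le K\,d_Y(y,y')+K,\\
 \tfrac1K d_Z(z,z')^{\gamma'}-K &\le d_\mu(z,z')\le K\,d_Z(z,z')+K.
\end{align*}
Exactly as in the proof of Theorem~\ref{CEresult}, form the lifted structure $\wmu$ on $\bigoplus_Z X\times Z$, pull it back to $p\wmu$ on $X\wr_Z Y$ along $(\mbf f,y)\mapsto(\mbf f,p(y))$, define $\widetilde\sigma,\widetilde\nu,\widetilde\omega$ by the same formulas, and put $\lambda=p\wmu+\widetilde\sigma+\widetilde\nu+\widetilde\omega$. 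By Proposition~\ref{propembed} the wall metric $d_\lambda$ is realised by a map into an $L^1$-space, so it suffices, with $r:=\min\bigl(\alpha',\beta',\tfrac{\gamma'}{\gamma'+\delta}\bigr)$, to produce constants $A,A'>0$ with
\[ \tfrac{1}{A'}\,d_{X\wr_Z Y}(u,v)^{r}-A'\ \le\ d_\lambda(u,v)\ \le\ A\,d_{X\wr_Z Y}(u,v)+A \qquad \forall\,u,v\in X\wr_Z Y. \]
Letting $\alpha'\to\alpha$, $\beta'\to\beta$, $\gamma'\to\gamma$ and using that $\gamma'\mapsto\gamma'/(\gamma'+\delta)$ is continuous and increasing then gives the stated lower bound on the compression of $X\wr_Z^C Y$.

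For the right-hand inequality I would copy the second half of the proof of Theorem~\ref{CEresult} almost verbatim. Write $u=(\mbf f,y)$, $v=(\mbf g,y')$, $R=d_{X\wr_Z Y}(u,v)$, $I=\supp{\mbf f^{-1}\mbf g}$, $n=|I|$; under either set of hypotheses the argument of Remark~\ref{rem:equivalent} gives $n\lesssim R+1$. Pick $(y_1,\dots,y_n)\in\mathcal P_I$ with $d_Y(y,y_1)+\sum_{i=1}^{n-1}d_Y(y_i,y_{i+1})+d_Y(y_n,y')\le\path{I}{y}{y'}+1$, enumerate $\supp{\mbf f^{-1}\mbf g}\sqcup\{p(y),p(y')\}$ as $z_0=p(y),z_1,\dots,z_n,z_{n+1}=p(y')$ with $p(y_i)$ within $C$ of $z_i$, and feed this near-geodesic $Y$-path into the hypothesis $d_Z(p(a),p(b))\le a_0 d_Y(a,b)+b_0$ to obtain $\sum_{i=0}^{n}d_Z(z_i,z_{i+1})\le a_0(R+1)+(n+1)(b_0+2C)$. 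Since $d_{p\wmu}(u,v)\le\sum_{i=0}^{n}d_\mu(z_i,z_{i+1})$, combining this with the linear upper bounds for $d_\mu,d_\sigma,d_\nu$, with $\path{I}{y}{y'}\le R$ and $\sum_z d_X(\mbf f(z),\mbf g(z))\le R$, and with $n\lesssim R+1$, shows that each of $d_{\widetilde\nu}(u,v)$, $d_{p\wmu}(u,v)$, $d_{\widetilde\sigma}(u,v)$ and $d_{\widetilde\omega}(u,v)$ is at most a linear function of $R$; hence $d_\lambda(u,v)\le AR+A$.

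For the left-hand inequality, suppose $d_\lambda(u,v)\le t$, where $u=(\mbf f,y)$, $v=(\mbf g,y')$, $I=\supp{\mbf f^{-1}\mbf g}$, $n=|I|$. From $d_{\widetilde\omega}\le t$ we get $n\le t$; from $d_{\widetilde\nu}\le t$ and the lower bound for $d_\nu$, $d_Y(y,y')\le(K(t+K))^{1/\beta'}\lesssim t^{1/\beta'}+1$; from $d_{\widetilde\sigma}\le t$, the lower bound for $d_\sigma$ and $n\le t$, $\sum_z d_X(\mbf f(z),\mbf g(z))^{\alpha'}\le K(t+Kn)\lesssim t$, so that, since $s\mapsto s^{\alpha'}$ is subadditive ($\alpha'\le 1$), $\sum_z d_X(\mbf f(z),\mbf g(z))\le\bigl(\sum_z d_X(\mbf f(z),\mbf g(z))^{\alpha'}\bigr)^{1/\alpha'}\lesssim t^{1/\alpha'}+1$. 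From $d_{p\wmu}\le t$ we get $d_\mu(z,z')\le t$, hence $d_Z(z,z')\le(K(t+K))^{1/\gamma'}\lesssim t^{1/\gamma'}+1$, for all $z,z'\in\supp{\mbf f^{-1}\mbf g}\cup\{p(y),p(y')\}$; running the path-lifting step of the proof of Theorem~\ref{CEresult} and invoking the $\delta$-polynomial path lifting property produces $(y_1,\dots,y_n)\in\mathcal P_I$ with each step $d_Y(y_i,y_{i+1})\le\theta\bigl((K(t+K))^{1/\gamma'}\bigr)\lesssim t^{\delta/\gamma'}+1$, so that
\[ \path{I}{y}{y'}\ \lesssim\ n\bigl(t^{\delta/\gamma'}+1\bigr)+d_Y(y,y')\ \lesssim\ t^{1+\delta/\gamma'}+t^{1/\beta'}+1 . \]
Adding up, $d_{X\wr_Z Y}(u,v)=\path{I}{y}{y'}+\sum_z d_X(\mbf f(z),\mbf g(z))\lesssim t^{(\gamma'+\delta)/\gamma'}+t^{1/\beta'}+t^{1/\alpha'}+1\lesssim t^{1/r}+1$, where the last step uses $t^{a}+t^{b}\lesssim t^{\max(a,b)}+1$; rearranging gives $d_\lambda(u,v)\ge\tfrac{1}{A'}d_{X\wr_Z Y}(u,v)^{r}-A'$.

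Most of this is bookkeeping: chasing the constants, and using the elementary facts $\sum a_i^{\alpha'}\ge\bigl(\sum a_i\bigr)^{\alpha'}$ for $\alpha'\le 1$ and $t^{a}+t^{b}\lesssim t^{\max(a,b)}+1$. The one point where the argument genuinely goes beyond Theorem~\ref{CEresult} is the handling of $d_{p\wmu}$ in the Lipschitz direction: a merely bornologous $p$ gives only a bound on $d_{p\wmu}$ by some function of $R$, which is useless for compression, whereas the coarse-Lipschitz hypothesis $d_Z(p(y),p(y'))\le a_0 d_Y(y,y')+b_0$ upgrades this to a linear bound; and the $\delta$-polynomial path lifting property is exactly what turns the $Z$-side contribution in the power direction into the exponent $\gamma'/(\gamma'+\delta)$, hence $\gamma/(\gamma+\delta)$ in the limit. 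I expect the care to be concentrated in verifying that all the additive errors — the $-K$'s from the compression lower bounds, and the $(n+1)(b_0+2C)$ from the gauge enumeration — are genuinely dominated, which is why the summand $\widetilde\omega$, controlling $n=|\supp{\mbf f^{-1}\mbf g}|$, must be kept in $\lambda$.
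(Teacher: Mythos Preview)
Your proposal is correct and follows essentially the same approach as the paper's own proof: the same measured-walls structure $\lambda=p\wmu+\widetilde\sigma+\widetilde\nu+\widetilde\omega$, the same use of the coarse-Lipschitz hypothesis on $p$ to make $d_{p\wmu}$ large-scale Lipschitz, and the same use of the $\delta$-polynomial path lifting to bound $\path{I}{y}{y'}\lesssim t^{1+\delta/\gamma'}+t^{1/\beta'}$ in the compression direction. You are slightly more careful than the paper in that you pass explicitly through $\alpha'<\alpha$, $\beta'<\beta$, $\gamma'<\gamma$ and let them tend to the compressions at the end, whereas the paper works directly with $\alpha,\beta,\gamma$ as if they were attained.
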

	\begin{rem}
 Our bound generalizes the bound of Theorem $1.1$ in \cite{Li2010}. Note further that, as both $X$ and $Y$ can be considered as metric subspaces of $X\wr_Z Y$, one also has an upper bound, namely $\min(\alpha,\beta)$, for the compression of $X\wr_Z Y$.
	\end{rem}
	\begin{proof}[Proof of Theorem \ref{thm:comp}]
The starting point for this proof is the proof of Theorem \ref{CEresult} and we will often refer to inequalities stated there. For now, assume that $\alpha,\beta,\gamma$ are real numbers and that $f_1 \colon X \to L^1$, $f_2 \colon X \to L^1$ and $f_3 \colon Z \to L^1$ are large scale Lipschitz functions into $L^1$-spaces such that
\begin{align*}
 d_X(x,x')^{\alpha} \lesssim \norm{f_1(x) - f_1(x')}_1 \\
 d_Y(y,y')^{\beta}\lesssim \norm{f_2(y) - f_2 (y')}_1 \\
 d_{Z}(z,z')^{\gamma} \lesssim \norm{f_3(z) - f_3(z')}_1. \\
\end{align*}
Here, $\lesssim$ denotes inequality up to a multiplicative constant. The reason that we can take the lower bounds as above, is that by taking the direct sum of $f_i$ with the coarse embedding $\widetilde{f_i}:W\rightarrow l^2(W), w\mapsto \delta_w$, where $W=X,Y$ or $Z$, we can always assume that $\norm{f_1(w) - f_1(w')}_1\geq 1$ for distinct $w,w'$.

Let $d_{\sigma}$, $d_{\nu}$, and $d_{\mu}$ be the measured wall space structures associated to the functions $f_1, f_2, f_3$ by Proposition \ref{propembed}. Define the measured walls $d_{p\wmu}, d_{\widetilde \mu}, d_{\widetilde \sigma},d_{\widetilde{\omega}}$ on $X \wr_{Z} Y$ as in theorem \ref{CEresult}. As a first step, we are going to show that the function associated to the measured wall $d_\lambda=d_{p\wmu} + d_{\widetilde \mu} + d_{\widetilde \sigma}+d_{\widetilde{\omega}}$ is Lipschitz. That is, there is a constant $\widetilde{C}\in \mathbb{R}$ such that for every $(\mbf{f}, y), (\mbf{g}, y') \in X \wr_{Z} Y$,
\begin{equation*}
d_{p \widetilde{\mu}}((\mbf{f},y),(\mbf{g},y')) + d_{\widetilde \nu}((\mbf{f},y),(\mbf{g},y')) + d_{\widehat{\sigma}}((\mbf{f},y),(\mbf{g},y'))+d_{\widetilde{\omega}}(((\mbf{f},y),(\mbf{g},y')) \leq \widetilde{C} d_{X\wr_Z Y}((\mbf{f},y),(\mbf{g},y')).
\end{equation*}
By Equation \eqref{eq:support}, it follows that $d_{\widetilde{\omega}}$ corresponds to a large-scale Lipschitz function if $Y$ is uniformly discrete and $Z$ has $C$-bounded geometry. Starting from Equation \eqref{originaldistance} and \eqref{sumsigma}, one can easily show the same fact using only uniform discreteness of $X$.

As $d_\nu$ and $d_{\sigma}$ both correspond to large scale Lipschitz functions, this implies that so does $d_{\widetilde \nu} + d_{\widetilde \sigma}$:
\begin{align*}
 d_{\widetilde\nu}((\mbf{f},y),(\mbf{g},y')) + d_{\widetilde{\sigma}}((\mbf{f},y),(\mbf{g},y')) = & \ d_{\nu}(y,y') + \sum_{z \in Z} d_{\sigma}(\mbf{f}(z), \mbf{g}(z))\\
 \lesssim & \ d_Y(y,y') + 1+ \sum_{z \in Z} d_X(\mbf{f}(z), \mbf{g}(z)) + d_{\widetilde{\omega}}((\mbf{f},y),(\mbf{g},y')) \\
\lesssim & d_{X\wr_Z Y}((\mbf{f},y), (\mbf{g},y'))+1.
\end{align*}
It thus remains to show that $d_{p \widetilde{\mu}}$ corresponds to a Lipschitz function. 
Denote $y_0=y, y_{n+1}=y'$ and choose $(y_1, \ldots, y_n) \in \mathcal{P}_I$ such that 
\begin{equation*}
 \path{I}{y}{y'} \leq \sum_{i=0}^{n} d_Y(y_i, y_{i+1}) \leq \path{I}{y}{y'} + 1
\end{equation*}
Write $z_0=p(y),\ z_{n+1}=p(y')$ and enumerate the elements of $\supp{\mbf{f}^{-1} \mbf{g}}$ as $\{z_1,z_2,\ldots ,z_n\}$ where each $z_i$ lies in a $C$-ball around $p(y_i)$. As $p$ is bornologous, we have that $d_Z(z_i,z_{i+1})\leq 2C+ad(y_i,y_{i+1})+b$ for each $i$. Hence,
\begin{align*}
d_{p \widetilde{\mu}}((\mbf{f},y),(\mbf{g},y'))& \leq  \sum_{i=0}^{n} d_\mu(z_i,z_{i+1})\\
& \lesssim   \sum_{i=0}^{n} d_Z(z_i,z_{i+1}) + d_{\widetilde{\omega}}((\mbf{f},y),(\mbf{g},y'))\\
& \leq  n(2C+b)+a\sum_{i=0}^{n} d_Y(y_i, y_{i+1})+d_{\widetilde{\omega}}((\mbf{f},y),(\mbf{g},y'))\\
& =  d_{\widetilde{\omega}}((\mbf{f},y),(\mbf{g},y')) (2C+b+1)+a\sum_{i=0}^{n} d_Y(y_i, y_{i+1})\\
&\leq  d_{\widetilde{\omega}}((\mbf{f},y),(\mbf{g},y')) (2C+b+1) + a + a\ \path{I}{y}{y'} \\
&\lesssim  d_{X\wr_Z Y}((\mbf{f},y),(\mbf{g},y')) +1,
\end{align*}
where we use that $d_{\widetilde{\omega}}$ corresponds to a large-scale Lipschitz function.
We conclude that $d_{\lambda}$ is associated to a large scale Lipschitz map of $X\wr_Z Y$ into an $L^1$-space.

As a second step, we calculate the compression of $d_\lambda$. Assume first that $d_\lambda((\mbf{f},y),(\mbf{g},y'))\leq R$ for some $R>0$ such that Equations \eqref{mu}, \eqref{sigma}, \eqref{nu} and \eqref{omega} are valid. Enumerate the elements of $\supp{\mbf{f}^{-1}\mbf{g}}$, say $z_1, z_2, \ldots, z_n$. Set $z_0=p(y)$. Denote $y_0=y$, then use the path lifting property to take $y_1$ such that $d_Z(p(y_1),z_1)<C$ and $d(y_0,y_1)\leq ad(z_0,z_1)^\delta+b$. Next, take $y_2$ such that $d_Z(p(y_2),z_2)<C$ and such that $d(y_1,y_2)\leq a d_Z(z_1,z_2)^\delta+b$ and so on. By definition, we have
\[ \path{I}{y}{y'}\leq (\sum_{i=0}^{n-1} d_Y(y_i,y_{i+1})) + d_Y(y_n,y').\]
We now obtain
\begin{align*}
\path{I}{y}{y'} & \leq  \sum_{i=0}^{n-1}d_Y(y_i,y_{i+1}) +d_Y(y_n,y') \\
&\lesssim  \sum_{i=0}^{n-1}d_Y(y_i,y_{i+1}) +d_Y(y,y')\\
&\lesssim  \sum_{i=0}^{n-1} (d_Z(z_i,z_{i+1})^\delta +1) +d_Y(y,y')\\
&\lesssim  R+ \sum_{i=0}^{n-1} d_Z(z_i,z_{i+1})^\delta + d_\nu(y,y')^{1/\beta}\\
&\leq  R+ \sum_{i=0}^{n-1} d_Z(z_i,z_{i+1})^\delta +R^{1/\beta}\\
&\lesssim  R+ \sum_{i=0}^{n-1} d_\mu(z_i,z_{i+1})^{\delta/\gamma} +R^{1/\beta}\\
& \lesssim R+ RR^{\delta/\gamma} + R^{1/\beta}, 
\end{align*}
where the last inequality follows from the fact that
\[ d_\mu(z_i,z_{i+1})\leq d_{p\widetilde{\mu}}((\mbf{f},y),(\mbf{g},y'))\leq R.\]
Consequently, we obtain
\begin{align*}
d_{X\wr_Z Y}((\mbf{f},y),(\mbf{g},y')) & =  \path{I}{y}{y'} + \sum_{z\in Z} d_X(f(z),g(z)) \\
& \lesssim  R^{\frac{\delta}{\gamma} +1} + R^{1/\beta}+ \sum_{z \in Z} d_\sigma(f(z),g(z))^{1/\alpha} \\
& \lesssim  R^{\frac{\delta+\gamma}{\gamma}} + R^{1/\beta}+ (\sum_{z \in Z} d_\sigma(f(z),g(z)))^{1/\alpha} \\
& \lesssim  R^X, 
\end{align*}
where $X=\max(\frac{\delta+\gamma}{\gamma}, \frac{1}{\alpha},\frac{1}{\beta})$.
Consequently, the compression of $d_\lambda$, and hence of $X\wr_Z Y$ is bounded from below by
\[ \min(\alpha, \beta, \frac{\gamma}{\delta+\gamma}). \qedhere \] 
\end{proof}

\begin{rem}
At the end of Section 2 in \cite{Li2010}, the author shows that the $L^p$-compression $\alpha_p^*(X)$ of a metric space $X$ is always greater than $\max(\frac{1}{2},\frac{1}{p}) \alpha_1^*(X)$. Moreover, $L^p$ embeds isometrically into $L^1$ for any $p\in [1,2]$. So, for $p\in [1,2]$, we deduce that the positivity of the $L^p$-compression is preserved under generalized wreath products with the polynomial path lifting property.
\end{rem}
%

\section{Box spaces of wreath products}
Let $\{K_i\}$ be some collection of nested finite index normal subgroups of a finitely generated residually finite group $G$, for which the intersection $\cap_{n\in \mathbb{N}} K_n$ is trivial.
\begin{defn}
The \emph{box space} of $G$ corresponding to $\{K_i\}$, denoted by $\Box_{\{K_i\}} G$, is the disjoint union $\sqcup_i G/K_i$ of finite quotient groups of $G$, where each quotient is endowed with the metric induced by the image of the generating set of $G$, and the distance between any two distinct quotients is chosen to be greater than the maximum of their diameters. 
\label{def:boxspace}
\end{defn}
The box space of a group is thus only defined when the group is residually finite. The following theorem of Gruenberg tells us when this happens for a wreath product of two groups.

\begin{thm}[\cite{Gru}, Theorem 3.2]
Let $G$ and $H$ be residually finite groups. Then the wreath product $G\wr H$ is residually finite if and only if either $H$ is finite or $G$ is abelian.
\end{thm}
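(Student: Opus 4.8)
The plan is to argue both implications directly from the description $G\wr H=\bigoplus_H G\rtimes H$ with $H$ acting by index-shift, using $\delta_g$ for the element of $\bigoplus_H G$ supported at $e_H$ with value $g$. The ``if'' direction splits into the two cases of the hypothesis, and the ``only if'' direction is proved by contraposition via a commutator argument.

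\textbf{The ``if'' direction.} Suppose first that $H$ is finite. Then $\bigoplus_H G$ is a direct product of $|H|$ copies of the residually finite group $G$, hence residually finite, and it has finite index in $G\wr H$; since a group containing a residually finite subgroup of finite index is residually finite (pass to normal cores), $G\wr H$ is residually finite. Now suppose instead that $G$ is abelian, and let $w=(\mathbf{f},h)\neq e$. If $h\neq e_H$, then composing the projection $G\wr H\to H$ with a finite quotient of $H$ detecting $h$ separates $w$ from $e$. If $h=e_H$, choose $z_0\in H$ with $\mathbf{f}(z_0)\neq e_G$, a finite quotient $\pi\colon G\to A$ with $\pi(\mathbf{f}(z_0))\neq e$, and a finite-index normal subgroup $M\trianglelefteq H$ such that the finitely many elements of $\supp{\mathbf{f}}$ lie in pairwise distinct $M$-cosets (intersect the finitely many finite-index subgroups detecting the relevant differences and take the normal core). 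Using that $A$ is abelian, the collapsing map $\Phi\colon\bigoplus_H G\to\bigoplus_{H/M}A$, $\Phi(\mathbf{g})(zM)=\prod_{z'M=zM}\pi(\mathbf{g}(z'))$ (a finite product, well defined since $A$ is abelian), is a group homomorphism and is equivariant for the shift actions of $H$ and $H/M$; hence $(\mathbf{g},z)\mapsto(\Phi(\mathbf{g}),zM)$ is a homomorphism from $G\wr H$ onto the \emph{finite} group $A\wr(H/M)$. Since $z_0$ is the unique element of $\supp{\mathbf{f}}$ in its $M$-coset, the image of $w$ equals $\pi(\mathbf{f}(z_0))\neq e$ in the coordinate $z_0M$, so $w$ survives.

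\textbf{The ``only if'' direction.} Argue by contraposition: assume $H$ is infinite and $G$ is non-abelian, and fix $a,b\in G$ with $c:=[a,b]\neq e_G$, so that $[\delta_a,\delta_b]=\delta_c\neq e$ in $G\wr H$. The claim is that $\delta_c$ lies in the kernel of \emph{every} homomorphism $\varphi\colon G\wr H\to F$ with $F$ finite, which prevents $G\wr H$ from being residually finite. Indeed, the restriction of $\varphi$ to the canonical copy of $H$ has finite-index, hence nontrivial, kernel (as $H$ is infinite); pick $h\neq e_H$ in it. Then conjugating $\delta_b$ by $(\mathbf{e},h)$ yields the function $\delta_b^{(h)}$ supported at $h$ with value $b$, so $\varphi(\delta_b^{(h)})=\varphi(\delta_b)$; but $\delta_a$ and $\delta_b^{(h)}$ have disjoint supports (since $h\neq e_H$), hence commute, and therefore $e=\varphi([\delta_a,\delta_b^{(h)}])=[\varphi(\delta_a),\varphi(\delta_b)]=\varphi([\delta_a,\delta_b])=\varphi(\delta_c)$, proving the claim.

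\textbf{Main obstacle.} The commutator/pigeonhole argument for the ``only if'' direction is short, and the $H$-finite case is immediate; the delicate part is the $G$-abelian case. One must detect a single element of the (in general infinitely generated) base group $\bigoplus_H G$ in one finite quotient, and the natural construction — collapsing along a finite-index normal subgroup of $H$ and a finite quotient of $G$ — is a \emph{group homomorphism only when $G$ is abelian}, which is precisely where the hypothesis enters and why the statement fails otherwise. The rest is bookkeeping: choosing $M$ normal and fine enough to separate $\supp{\mathbf{f}}$, checking equivariance of $\Phi$, and the standard normal-core facts about residual finiteness passing to and from finite-index subgroups.
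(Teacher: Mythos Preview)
Your proof is correct. Note, however, that the paper does not actually prove this statement: it is quoted verbatim as Theorem~3.2 of Gruenberg~\cite{Gru}, so there is no ``paper's own proof'' to compare against. The paper does cite the companion result \cite[Lemma~3.2]{Gru} --- that for $G$ abelian, any surjection $H\twoheadrightarrow K$ extends naturally to $G\wr H\twoheadrightarrow G\wr K$ --- and your collapsing map $\Phi$ (composed with $\pi\colon G\to A$ coordinatewise) is precisely this extension applied to $H\twoheadrightarrow H/M$ followed by $G\wr(H/M)\twoheadrightarrow A\wr(H/M)$. So your argument in the $G$-abelian case is exactly in the spirit of Gruenberg's original, and your commutator/pigeonhole argument for the ``only if'' direction is the standard one. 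Both directions are clean; nothing is missing.
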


We will focus on the interesting case when $H$ is infinite and $G$ is abelian. We will use the following lemma from \cite{Gru}, which forms part of the proof of the theorem above. 

\begin{lem}[\cite{Gru}, Lemma 3.2]
If $G$ is an abelian group, then any surjective homomorphism from a group $H$ to a group $K$ can be extended in a natural way to a surjective homomorphism from the wreath product $G\wr H$ to $G\wr K$.
\end{lem}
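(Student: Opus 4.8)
The plan is to write the extension down explicitly as a ``pushforward'' on the base group. Let $\pi\colon H\to K$ be the given surjective homomorphism. For a finitely supported function $\mbf{f}\colon H\to G$, define $\pi_*\mbf{f}\colon K\to G$ by
\[
 (\pi_*\mbf{f})(k)=\sum_{h\in\pi^{-1}(k)}\mbf{f}(h),
\]
which has only finitely many nontrivial terms and is unambiguous precisely because $G$ is abelian, so that no ordering of the fibre $\pi^{-1}(k)$ is required. Since $\supp{\pi_*\mbf{f}}\subseteq\pi(\supp{\mbf{f}})$ is finite, setting $\Pi(\mbf{f},h)\mathrel{\mathop:}=(\pi_*\mbf{f},\pi(h))$ yields a well-defined map $\Pi\colon G\wr H\to G\wr K$.

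First I would check that $\Pi$ is a group homomorphism. The second coordinate behaves correctly since $\pi(hh')=\pi(h)\pi(h')$, so the content is the identity $\pi_*(\mbf{f}\cdot(h\mbf{g}))=(\pi_*\mbf{f})\cdot(\pi(h)\,\pi_*\mbf{g})$ in $\bigoplus_K G$. Evaluating the left-hand side at $k\in K$ and using $(h\mbf{g})(h')=\mbf{g}(h^{-1}h')$ gives
\[
 \big(\pi_*(\mbf{f}\cdot(h\mbf{g}))\big)(k)=\sum_{h'\in\pi^{-1}(k)}\mbf{f}(h')\,\mbf{g}(h^{-1}h').
\]
This is the one place where commutativity is essential: in an abelian group a finite product $\prod_i a_ib_i$ equals $(\prod_i a_i)(\prod_i b_i)$, so the right-hand side factors as $(\pi_*\mbf{f})(k)$ times $\sum_{h'\in\pi^{-1}(k)}\mbf{g}(h^{-1}h')$. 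Reindexing this last sum by $u=h^{-1}h'$, so that $h'\in\pi^{-1}(k)$ if and only if $u\in\pi^{-1}(\pi(h)^{-1}k)$, identifies it with $(\pi_*\mbf{g})(\pi(h)^{-1}k)=(\pi(h)\,\pi_*\mbf{g})(k)$, which is exactly the desired value.

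Surjectivity is then straightforward: given $(\mbf{F},k)\in G\wr K$, choose $h\in\pi^{-1}(k)$ and, for each $k'\in\supp{\mbf{F}}$, choose some $h_{k'}\in\pi^{-1}(k')$; let $\mbf{f}(h_{k'})=\mbf{F}(k')$ and $\mbf{f}\equiv e_G$ elsewhere. Each fibre then meets $\supp{\mbf{f}}$ in at most one point, so $\pi_*\mbf{f}=\mbf{F}$ and $\Pi(\mbf{f},h)=(\mbf{F},k)$. Naturality is built into the construction: $\Pi$ restricts to $\pi$ on the copy $\set{(\mbf{e},h):h\in H}$ of $H$, acts as the functorial pushforward on the base $\bigoplus_H G$, and sends the standard generators $(\mbf{e},t)$ and $(\delta_s,e_H)$ of $G\wr H$ to $(\mbf{e},\pi(t))$ and $(\delta_s,e_K)$ respectively; equivalently, $\Pi$ is the image of $\pi$ under the functor $G\wr(-)$. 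I expect the only genuine obstacle to be isolating the abelian rearrangement used in the homomorphism check — it is simultaneously what makes $\pi_*$ well-defined and what makes it multiplicative, and it is precisely the point where the hypothesis that $G$ be abelian is used.
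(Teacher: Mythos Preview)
Your argument is correct and is precisely the standard construction. The paper does not supply its own proof of this lemma; it simply quotes it from Gruenberg \cite{Gru}, so there is nothing to compare against beyond noting that your pushforward $\Pi(\mbf{f},h)=(\pi_*\mbf{f},\pi(h))$ is the ``natural'' extension the statement refers to. One small presentational point: you define $\pi_*\mbf{f}$ with a $\sum$ and then speak of products $\prod_i a_ib_i$; pick one notation (additive is cleanest here, since the key step is just $\sum_{h'}[\mbf{f}(h')+\mbf{g}(h^{-1}h')]=\sum_{h'}\mbf{f}(h')+\sum_{h'}\mbf{g}(h^{-1}h')$). You might also record, since the paper relies on it immediately afterwards, that your description makes the kernel visible: $\ker\Pi$ consists of those $(\mbf{f},h)$ with $h\in\ker\pi$ and $\pi_*\mbf{f}=\mbf{e}$, and this is exactly the normal closure of $\ker\pi$ in $G\wr H$.
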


We note that it follows from the proof that the kernel of the resulting surjective homomorphism is the smallest normal subgroup of $G\wr H$ containing the kernel of the surjective homomorphism from $H$ to $K$, i.e. the normal closure of this kernel in $G\wr H$.

\begin{thm}\label{BoxResult}
Let $G$ be a finitely generated abelian group and let $H$ be a finitely generated residually finite group which has a box space which embeds coarsely into a Hilbert space. Then there is a box space of the wreath product $G\wr H$ which coarsely embeds into a Hilbert space. 
\end{thm}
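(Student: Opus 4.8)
The plan is to use the box space of $H$ that coarsely embeds into a Hilbert space to build a box space of $G \wr H$, and then apply Theorem \ref{CEresult} (in its $L^1$ version, which is equivalent to Hilbert space embeddability for metric spaces via Proposition \ref{propembed}) to each ``level'' of the box space, with appropriate uniformity across levels. Concretely: fix a generating set $S$ of $G$ and $T$ of $H$, and let $\{K_i\}$ be a chain of finite-index normal subgroups of $H$ with trivial intersection such that $\Box_{\{K_i\}} H = \sqcup_i H/K_i$ embeds coarsely into $\mathcal{H}$. By Gruenberg's Lemma \ref{Gru} (as quoted just before the theorem), since $G$ is abelian each quotient map $H \to H/K_i$ extends to a surjection $G \wr H \to G \wr (H/K_i)$, whose kernel $N_i$ is the normal closure in $G \wr H$ of $K_i$; these $N_i$ are finite-index normal in $G \wr H$ and one checks $\bigcap_i N_i = \{1\}$ (an element of the intersection projects to $1$ in $H$, hence lies in $\bigoplus_H G$, and then the support condition together with $\bigcap K_i = 1$ forces it to be trivial). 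So $\Box_{\{N_i\}}(G \wr H) = \sqcup_i (G \wr H)/N_i = \sqcup_i \big( G \wr_{H/K_i} (H/K_i) \big)$ is a genuine box space of $G \wr H$.

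Next I would identify each component $(G\wr H)/N_i \cong G \wr_{H/K_i}(H/K_i)$, as a metric space with its word metric, with the metric wreath product $\big(\sqcup_j G/L_j\big) \wr_{H/K_i}^{C} \big(\sqcup_j H/K_j\big)$ restricted suitably — more precisely I want to realize the whole box space $\Box_{\{N_i\}}(G\wr H)$, or at least each component, inside a single metric wreath product $X \wr_Z^C Y$ with $X = G$ (or a box space of $G$, which since $G$ is finitely generated abelian is itself coarsely embeddable, indeed has property A), $Y = Z = \Box_{\{K_i\}} H$, and $p = \mathrm{id}$ so $C$ can be taken $1$ and the coarse path lifting property holds trivially with $\theta = \mathrm{id}$. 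The point is that the permutational wreath product $G \wr_{H/K_i}(H/K_i)$ with its word metric coincides, up to the usual comparison, with the metric wreath product of $G$ by $H/K_i$ over itself (the remark after the definition of $X\wr_Z^C Y$ makes exactly this identification for Cayley graphs). The disjoint-union structure of box spaces is compatible with this: a box space of a wreath product where the $H$-factor is replaced by a box space of $H$ should be coarsely equivalent, level by level, to the box space $\Box_{\{N_i\}}(G\wr H)$, with distances between distinct components rescaled as in Definition \ref{def:boxspace}.

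Then I would invoke Theorem \ref{CEresult} with $X$ a coarsely embeddable box space of $G$ (using that $G$ is finitely generated abelian), $Y = Z = \Box_{\{K_i\}}H$ coarsely embeddable by hypothesis, $p = \mathrm{id}$, $C = 1$: here $Y$ is uniformly discrete (it is a disjoint union of Cayley graphs of finite groups with the integer-valued word metric, separated by large gaps) and $Z = Y$ has $1$-bounded geometry (each generating set is finite and the gaps between components exceed the diameters, so $1$-balls are uniformly bounded), so all hypotheses are met and $X \wr_Z^1 Y$ coarsely embeds into an $L^1$-space, hence into a Hilbert space. The main obstacle — and the step requiring genuine care — is the identification in the middle paragraph: one must check that the box space $\Box_{\{N_i\}}(G\wr H)$ is coarsely equivalent to (a subspace of, or a space quasi-isometric at the level of each component to) the metric wreath product $X\wr_Z^1 Y$, uniformly over all components simultaneously, so that a single coarse embedding of $X\wr_Z^1 Y$ restricts to a coarse embedding of the box space with the same $\rho_\pm$. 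This amounts to a careful but routine comparison of the two word metrics — on $(G\wr H)/N_i$ from the generating set images, and on $G\wr_{H/K_i}(H/K_i)$ — together with the observation that the ``support size'' bound (as in \eqref{eq:support} / Remark \ref{rem:equivalent}) is automatic here because $X$, $Y$, $Z$ are all uniformly discrete of bounded geometry, so the comparison constants do not blow up with $i$.
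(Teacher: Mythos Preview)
There is a genuine gap in your construction of the box space. When $G$ is infinite (for instance $G=\mathbb{Z}$), the quotients
\[
(G\wr H)/N_i \;\cong\; G\wr(H/K_i) \;=\; \Big(\bigoplus_{H/K_i} G\Big)\rtimes (H/K_i)
\]
are infinite, so your $N_i$ are \emph{not} of finite index in $G\wr H$ and $\Box_{\{N_i\}}(G\wr H)$ is not a box space in the sense of Definition~\ref{def:boxspace}. Your parenthetical remark about using ``a box space of $G$'' shows you sense this, but it is not integrated into the construction: taking $X$ to be a box space of $G$ in the metric wreath product does nothing to make the quotients of $G\wr H$ finite.

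The repair, which is what the paper does, is to also choose a nested chain $\{L_i\}$ of finite-index subgroups of $G$ with trivial intersection (this exists because $G$ is finitely generated abelian, hence residually finite), and take as quotients the genuinely finite groups $(G/L_i)\wr(H/K_i)$; the kernels $M_i$ are then the normal closures of $\langle K_i, L_i\rangle$ in $G\wr H$, and one checks they are nested with trivial intersection. The paper then avoids your ``single big metric wreath product'' device altogether: instead it applies Theorem~\ref{CEresult} (or Li's original result) separately to each finite wreath product $(G/L_i)\wr(H/K_i)$, using as input the restrictions to $G/L_i$ and $H/K_i$ of the coarse embeddings of $\Box_{\{L_i\}}G$ and $\Box_{\{K_i\}}H$. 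The crucial observation is that the resulting compression functions $\nu_\pm$ depend only on the compression functions $\rho_\pm,\tau_\pm$ of the two input box spaces (and on fixed structural constants), not on $i$; hence the embeddings of the individual finite quotients assemble into a coarse embedding of the disjoint union $\Box_{\{M_i\}}(G\wr H)$. Your attempt to realise the whole box space inside one $X\wr_Z^1 Y$ can in principle be made to work after the above correction, but it requires fussing with basepoints (the basepoint of $X=\sqcup_j G/L_j$ lies in a single component) and verifying that the inclusion is an isometric embedding at each level; the paper's level-by-level argument with uniform compression is both shorter and more transparent.
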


\begin{proof}
By Theorem 3.2 of Gruenberg (\cite{Gru}), it makes sense to talk about box spaces of $G\wr H$ since it is residually finite. 

Using the above-mentioned Lemma 3.2 of \cite{Gru}, given any normal subgroup $N$ of $H$, we have a surjective homomorphism from $G\wr H$ onto $G\wr (H/N)$. 
Now given a normal subgroup $K$ of one of the direct summands $G$ of $\bigoplus_{H/N} G$, its normal closure in $G\wr (H/N)$ will be $\bigoplus_{H/N} K$. One can then take the quotient of $G\wr (H/N)$ by the normal closure of $K$ to get $$\bigoplus_{H/N} G/K \rtimes H/N \cong (G/K)\wr(H/N).$$

We will now build the embeddable box space of $G\wr H$. 
Let $\{N_i\}$ be a nested sequence of normal finite index subgroups of $H$ such that the corresponding box space $\Box_{\{N_i\}} H$ embeds coarsely into a Hilbert space. Since $G$ is finitely generated abelian, and hence residually finite and amenable, Guentner's result tells us that the box space $\Box_{\{K_i\}} G$ of $G$ with respect to any collection of nested finite index normal subgroups $\{K_i\}$ with trivial intersection has property $A$ and thus coarsely embeds into a Hilbert space. 

For each $i\in \mathbb{N}$, taking the quotient as described above yields $(G/K_i)\wr (H/N_i)$ as a quotient of $G\wr H$. Note that this quotient is finite. The kernel, which we will denote by $M_i$, is thus of finite index in $G\wr H$. 

To show that $\Box_{\{M_i\}} G\wr H$ is indeed a box space, it remains to check that the sequence of normal subgroups $\{M_i\}$ is nested and has trivial intersection. 

It is easy to see that $\cap_i M_i= \{1\}$, since this is equivalent to showing that for each non-trivial element of $G\wr H$, there is some $i$ such that the image of this element remains non-trivial in the finite quotient $(G\wr H)/M_i$. So, given a non-trivial element $(\gamma, h) \in G\wr H$, observe that if $h\neq 1$, then there is some $i$ for which the image of $h$ in the quotient $H/N_i$ is non-trivial. Thus, $(\gamma, h)$ remains non-trivial in $(G\wr H)/M_i$, so this is the required $i$. 

If $h$ is trivial, then $\gamma\in \bigoplus_{H} G$ must have at least one non-trivial entry $g\in G$ in some direct summand. Take $j$ large enough so that the images of the elements of $H$, for which $\gamma$ has a non-trivial entry in the corresponding direct summand, are distinct and non-trivial in $H/N_j$. This can be done since $H$ is residually finite.
Since the $K_i$ have trivial intersection, there is some index $i>j$ such that $g$ has non-trivial image in the quotient $G/K_i$. Hence the image of $(\gamma, 1)$ must be non-trivial in the quotient $(G\wr H)/M_i$. 

To show that the $M_i$ are nested we just need to observe that each $M_i$ is the normal closure of the subgroup generated by $N_i$ and $K_i$ in $G\wr H$. This is easy to see, having noted that the kernel of the surjection from $G\wr H$ to $G\wr H/N_i$ is the normal closure of $N_i$ in $G\wr H$. Since the $K_i$ and $N_i$ are nested, so are the normal closures of the subgroups generated by them. 

Let $\psi:\Box_{\{N_i\}}H \longrightarrow \ell^1$ and $\phi: \Box_{\{K_i\}}G \longrightarrow \ell^1$ be coarse embeddings into $\ell^1$ (we know these exist since $\ell^2$ coarsely embeds into $\ell^1$), and let $\rho_{\pm}$ and $\tau_{\pm}$ be the strictly increasing unbounded compression functions for the embeddings $\psi$ and $\phi$ respectively. 

For each pair of quotients $H/N_i$ and $G/K_i$ in the box spaces, our Theorem \ref{CEresult} implies that the wreath product $(G/K_i)\wr(H/N_i)\cong (G\wr H)/M_i$ coarsely embeds into a Hilbert space with compression functions $\nu_{\pm}$ which depend only on $\rho_{\pm}$ and $\tau_{\pm}$. Actually, instead of using our Theorem \ref{CEresult}, which generalizes Li's Theorem $1.2$ in \cite{Li2010}, we could in this setting just as well use Li's original result for $p=2$.

So, the disjoint union $\sqcup_i(G/K_i)\wr(H/N_i)$ (that is, the box space $\Box_{\{M_i\}}G\wr H$) coarsely embeds into a Hilbert space, with compression functions depending only on $\nu_{\pm}$ and the chosen distances between quotients. Thus, we have found an embeddable box space of $G\wr H$. 
\end{proof}

\begin{rem}
If $H$ is finite, given an embeddable box space $\Box_{\{K_i\}}G$ of $G$, it is easy to construct an embeddable box space of $G \wr H$. Let $N_i$ be the normal closure in $G\wr H$ of the subgroup $K_i$ of one of the summands $G$ of $\bigoplus_H G$. It is clear that the resulting subgroups $\{N_i\}$ form a nested sequence of finite index normal subgroups of $G\wr H$ with trivial intersection, and that the box space $\Box_{\{N_i\}} G\wr H$ coarsely embeds into a Hilbert space. 
\end{rem}

 \section*{Acknowledgement}
The first and second author thank Jacek Brodzki for interesting discussions. The third author wishes to thank Yves Stalder for stimulating conversations.

\end{document}